\newtheorem{lemma}{Lemma}[section]
\newtheorem{theorem}{Theorem}[section]
\newtheorem{proposition}{Proposition}[section]
\newtheorem{remark}{Remark}[section]
\newtheorem{definition}{Definition}[section]
\newenvironment{proof}{\noindent {\bf Proof.} \noindent}{\hfill \hbox{$\Box$} }
\begin{document}


\title{\Large Additive Schwarz preconditioner for the general finite volume element discretization of symmetric elliptic problems}

\author{\normalsize Leszek Marcinkowski\thanks{Faculty of Mathematics, University of Warsaw, Banacha 2, 02-097 Warszawa, Poland.
} \ \ \ \ Talal Rahman\thanks{Department of Computing, Mathematics and Physics, Bergen University College, Inndalsveien 28, 5020 Bergen, Norway}\ \ \ \ Atle Loneland\thanks{Department of Informatics, University of Bergen, Thorm{\o}hlensgt. 55, 5020 Bergen, Norway} \ \ \ \ Jan Valdman\thanks{Institute of Mathematics and Biomathematics, University of South Bohemia, Brani\v{s}ovsk\' {a} 31, 37005 \v{C}esk\'{e} Bud\v{e}jovice, and Institute of Information Theory and Automation of the ASCR, Pod Vod\'{a}renskou v\v{e}\v{z}\'{i} 4, 18208 Prague, Czech Republic.
}}

\date{}
\maketitle

\begin{abstract}
A symmetric and a nonsymmetric variant of the additive Schwarz pre\-conditioner are proposed for the solution of a
a general finite volume element discretization of symmetric elliptic problems, with large jumps in the entries of the coefficient matrices across subdomains. It is shown that the convergence of the pre\-conditioned GMRES iteration using the proposed preconditioners, depends poly\-logarithmically on the mesh parameters, in other words only weakly, and that they are robust with respect to the jumps in the coefficients.
\end{abstract}

\section{Introduction} \label{sec:intro}
The finite volume element method or the FVE method, also known in the literature as the  control volume finite element method or the CVFE, provides a systematic approach to construct a finite volume or a control volume discretization of the differential equations using a finite element approximation of the discrete solution. The method has drawn a lot of interest in the scientific communities because of its inheriting both the flexibility of using a finite element method and the conserving property of a finite volume discretization.

In this paper, we consider the classical finite volume discretization in which we seek for the discrete solution in the space of standard $P_1$ conforming finite element functions, i.e. continuous and piecewise linear functions, cf. \cite{Huang:1998:OFV,Ewing:2000:MFV,Ewing:2002:OTA}. Then we consider the second order elliptic partial differential equation with coefficients that may have large jumps across subdomains. Due to the finite volume discretization, the resulting systems are in general nonsymmetric, which become increasingly nonsymmetric for coefficients varying increasingly rapidly inside the finite elements. Designing robust and efficient algorithms for the numerical solution of such systems is often a challenge, particularly difficult is their analysis, which is not as well understood as it is for the symmetric system. The purpose of this paper is to design and study a class of robust and scalable preconditioners based on the additive Schwarz domain decomposition methodology, to be used in a preconditioned GMRES iteration for solving the system, cf. \cite{Saad:1986:GGM}.

Additive Schwarz methods have been extensively studied in the literature, cf. \cite{Toselli:2005:DDM}. When it comes to solving second order elliptic problems, the general focus has been in solving symmetric systems resulting from the finite element discretization of the problem. Despite the growing interest for finite volume elements, there exists only a limited number of research on fast methods for the nonsymmetric system resulting from the discretization, in particular methods like the domain decomposition which are considered among the most powerful methods for large scale computation have rarely been tested on finite volume elements. Among the few existing work known to the authors, are the works of \cite{Chou:2003:ADD,Zhang:2006:ODD} which consider overlapping variants of the additive Schwarz method (ASM) for the system. However, none of the existing work considers a substructuring type method. Such methods are called nonoverlapping Schwarz methods, and are known to have better convergence rate than their overlapping counter parts, cf. \cite{Toselli:2005:DDM}. The purpose of this work is to propose preconditioners for the finite volume element which are based on substructuring, and formulate them as additive Schwarz preconditioners. We show that their convergence depend poly-logarithmically on the mesh parameter.

For the general purpose of constructing an additive Schwarz preconditioner for a finite volume element discretization, and its analysis, we have in this paper formulated an abstract framework which is then further used for the preconditioners we are proposing. The framework borrows the basic ingredients of the abstract Schwarz framework for additive Schwarz methods, cf. \cite{Toselli:2005:DDM}, while the analysis follows the work of \cite{Cai:1992:DDA} where additive Schwarz methods were considered for the advection-diffusion problem. For further information on domain decomposition methods for nonsymmetric problems in general, we refer to \cite{Smith:1996:DDP,Toselli:2005:DDM,Mathew:2008:DDM}.

The paper is organized as follows: in Section~\ref{sec:diff-problem}, we present the differential problem,
and in Section~\ref{sec:FVD}, its finite volume element discretization. In Section~\ref{sec:ASM}, we present the two variants of the additive Schwarz preconditioners and the two main results, theorems~\ref{thm:ASM-edge-sym} and ~\ref{thm:ASM-edge-nsym}. The complete analysis is provided in the next two sections, the abstract framework in Section~\ref{sec:ASM-covol}, and the required estimates in Section~\ref{sec:techn-tools}.
Finally, numerical results are provided in Section~\ref{sec:numer-tests}.

Throughout this paper, we use the following notations: for any positive functions $w, x, y$, and $z$, and positive constants $c$ and $C$ independent of mesh parameters and jump coefficients: $x\lesssim y$ and $w\gtrsim z$ denote that $x\leq c y$ and $w\geq C z$, respectively.

%
%
%
\section{The differential problem}
\label{sec:diff-problem}
Given $\Omega$, a polygonal domain in the plane, and $f \in L^2(\Omega)$, the purpose is to solve the following differential equation,
\begin{eqnarray*}
- \nabla \cdot (A(x) \nabla u)(x)&=&f(x), \quad \ \ x\in \Omega, \\
       u(s)&=&0, \qquad \quad s \in \partial \Omega,
\end{eqnarray*}
where $A \in (L^\infty(\Omega))^4$ is a symmetric matrix valued function satisfying the uniform ellipticity  as follows,
$$
 \exists \, \alpha >0 \quad \mbox{such that} \quad \xi^T A(x)\xi\geq \alpha|\xi|_2^2 \quad \forall x \in \Omega \mbox{ and } \forall \xi \in \mathbb{R}^2,
$$
where $|\xi|_2^2=\xi_1^2+\xi_2^2$.
Further we consider $\alpha$  equal to 1 which can be always obtained by scaling the original problem by $\alpha^{-1}$.
We  assume that $\Omega$ is decomposed into a set of disjoint polygonal subdomains $\{D_j\}$ such that, in each subdomain $D_j$, $A(x)$ is continuous and smooth in the sense that
\begin{equation}\label{eq:subd_D_K}
 \|A\|_{W^{1,\infty}(D_i)}\leq C_\Omega,
\end{equation}
where $C_\Omega$ is a positive constant. We also assume that
$$
\exists \, \lambda_j >0 \quad \mbox{such that} \quad
\xi^T A(x)\xi \geq \lambda_j |\xi|_2^2 \geq  |\xi|_2^2 \quad \forall x \in D_j \mbox{ and } \forall \xi \in \mathbb{R}^2.
$$
Due to $A\in (L^\infty(D_j))^4$, we have the following,
$$
\exists\, \Lambda_j >0 \mbox{ such that } \quad | \nu^T A(x)\xi | \leq \Lambda_j| \nu|_2 |\xi|_2 \quad \forall x \in D_j  \mbox{ and } \forall \xi,\nu  \in \mathbb{R}^2.
$$
We also assume that $\Lambda_j\leq C_1 \lambda_j$ for a positive constant $C_1$. We then have,
\begin{equation}\label{eq:coeff-smooth-subd}
  \lambda_j |u|_{H^1(D_j)}^2\leq
      \int_{D_j} \nabla u^TA(x)\nabla u \:d x
    \leq \Lambda_j |u|_{H^1(D_j)}^2
\quad \forall u \in H^1(D_j).
\end{equation}
In the weak formulation, the differential problem is then to find $u\in H^1_0(\Omega)$ such that
\begin{equation}\label{eq:sym-bil-form}
 a(u,v)=f(v) \quad \forall v \in H^1_0(\Omega),
\end{equation}
where $$a(u,v) = \int_{\Omega} \nabla u^T A(x) \nabla u \:d x \quad \mbox{and}  \quad f(v)=\int_{\Omega}f v \:d x.$$

%
%
%
\section{The discrete problem}
\label{sec:FVD}
For the discretization of our problem, we use a finite volume element discretization, i.e. the equation (\ref{eq:sym-bil-form}) is discretized using the standard finite volume method on a mesh which is dual to the primal mesh, and the primal mesh is where the finite element space, our solution space, is defined, cf. \cite{Huang:1998:OFV,Ewing:2000:MFV,Ewing:2002:OTA}; for an overview of FV methods we refer to   \cite{Lin:2013:FVEM}.

Let $T_h=T_h(\Omega)$ be be a shape regular triangulation of $\Omega$, cf. \cite{Brenner:2002:MTF} or \cite{Braess:1997:FE}, hereon referred to as the primal mesh, consisting of triangles $\{\tau\}$ with the size parameter $h=\max_{\tau \in T_h} \mathrm{diam}(\tau)$, and let $\Omega_h$, $\partial\Omega_h$, and $\overline{\Omega}_h$ be the sets of triangle vertices corresponding to $\Omega$, $\partial\Omega$, and $\overline{\Omega}$, respectively. We assume that each $\tau\in T_h$ is contained in one of $D_j$.

Let $V_h$ be the conforming linear finite element space consisting of functions which are continuous piecewise linear over the triangulation $T_h$, and which are equal to zero on $\partial \Omega$.

Let $T_h^* = T_h^*(\Omega)$ be the dual mesh corresponding to $T_h$. For simplicity we use the so called Donald mesh for the dual mesh. For each triangle $\tau\in T^h$, let $c_{\tau}$ be the centroid, $x_j, \; j=1,2,3$ the three vertices, and $m_{k l}=m_{l k}, \; k,l=1,2,3$ the three edge midpoints. Divide each triangle $\tau$ into three polygonal regions inside the triangle by connecting its edge midpoints $m_{k l}=m_{l k}$ to its centroid $c_{\tau}$ with straight lines.
One such polygonal region $\omega_{\tau,x_1}\subset \tau$, associated with the vertex $x_1$, as illustrated in Figure~\ref{fig:cov-tr}, is the region which is enclosed by the line segments $\overline{c_{\tau}m_{13}}, \overline{m_{13}x_1}, \overline{x_1m_{12}}$, and $\overline{m_{12}c_{\tau}}$, and whose vertices are $c_{\tau}, m_{13}, x_1$, and $m_{12}$.
Now let $\omega_{x_k}$ be the control volume associated with the vertex $x_k$, which is the sum of all such polygonal regions associated with the vertex $x_k$, i.e.
$$
\omega_{x_k}=\bigcup_{\{\tau\in T_h: \: x_k \mbox{ is a vertex of } \tau\}} \omega_{\tau,x_k}
$$
The set of all such control volumes form our dual mesh, i.e.
$T_h^*=T_h^*(\Omega)=\{\omega_x\}_{x \in \overline{\Omega}_h}$.
A control volume $\omega_{x_k}$ is called a boundary control volume if $x_k\in \partial\Omega_h$.

\begin{figure}[htb]
 \centerline{
 \SetLabels
  \L\B (.5*.45) $c_{\tau}$ \\
  \L\B (1.*.1) $x_1$ \\
  \L\B (-.1*.05) $x_2$ \\
  \L\B (.52*1.) $x_3$ \\
  \L\B (.5*-.1) $m_{12}$ \\
  \L\B (.8*.5) $m_{13}$ \\
  \L\B (.1*.5) $m_{23}$ \\
 \endSetLabels
 \AffixLabels{
 \includegraphics[width=0.3\textwidth]{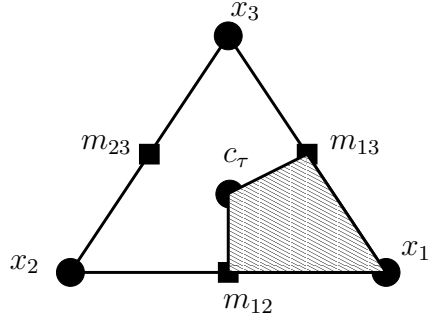}}}
 \caption{Showing $\omega_{\tau,x_1}$ (shaded region) which is part of the control volume $\omega_{x_1}$ restricted to the triangle $\tau$. The control volume is associated with the vertex $x_1$.} \label{fig:cov-tr}
\end{figure}

Let $V_h^*$ be the space of  piecewise constant functions over the dual mesh $T_h^*$, which have values equal to zero on $\partial \Omega_h$.
We let the nodal basis of $V_h$ be $\{\phi_x\}_{x\in \Omega_h}$, where $\phi_x$ is the standard finite element basis function which is equal to one at the vertex $x$ and zero at all other vertices. Analogously, the nodal basis of $V_h^*$ is $\{\psi_x\}_{x\in \Omega_h}$ where $\psi_x$ is a piecewise constant function which is equal to one over the control volume $\omega_x$ associated with the vertex $x$, and is zero elsewhere.

The two interpolatory operators, $I_h$ and $I_h^*$, are defined as follows. $I_h:C(\Omega)+V_h^*\rightarrow V_h$ and $I_h^*:C(\Omega)\rightarrow V_h^*$ are given respectively as
\begin{eqnarray*}
  I_h^* v=\sum_{x\in \Omega_h} v(x)\psi_x \quad \mbox{and} \quad
  I_h v=\sum_{x\in \Omega_h} v(x)\phi_x.
\end{eqnarray*}
We note here that $I_hI_h^*v=v$ for $v \in V_h$, as well as $I_h^* I_h u=u$ for $u \in V_h^*$.

Let the finite volume bilinear form be defined on $V_h\times V_h^*$ as $a_{FV}: V_h\times V_h^*\rightarrow \mathbb{R}$ such that
$$
a_{FV}(u,v)=-\sum_{x_i\in \Omega_h}v_i \int_{\partial V_i}A \nabla u \textbf{n} ds \quad u\in V_h, \; v\in V_h^*,
$$
or equivalently on $V_h\times V_h$ as $a_h: V_h\times V_h\rightarrow \mathbb{R}$ such that
\begin{eqnarray}\label{eq:FV-bil-form}
  a_h(u,v)&=&a_{FV}(u,I_h^* v)
\end{eqnarray}
for $u,v\in V_h$. We note here that $a_h(\cdot,\cdot)$ is a nonsymmetric bilinear form in general, while $a(\cdot,\cdot)$ is a symmetric bilinear form.
The discrete problem is then to find $u_h \in V_h$ such that
\begin{equation} \label{eq:FVdp}
  a_{FV}(u_h,v)=f(v)  \quad \forall v \in V_h^*,
\end{equation}
or equivalently
$$
  a_h(u_h,v)=f(I_h^* v) \quad \forall v \in V_h .
$$
The problem has a unique solution if $h$ is sufficiently small which can be shown following the lines of \cite{Ewing:2002:OTA}.

We close this section with the following remark. Note that in some cases, the bilinear form $a_h(\cdot,\cdot)$ may equal the symmetric bilinear form $a(\cdot,\cdot)$, as for instance in the case when the matrix $A$ is piecewise constant over the subdomains $\{D_j\}$. This may not be true if we choose to use a different dual mesh or if $A$ is not piecewise constant. In this paper, we only consider the case when the bilinear form $a_h(\cdot,\cdot)$ is nonsymmetric.

\section{An edge based ASM method}
\label{sec:ASM}
In this section, we propose an edge based Additive Schwarz method (ASM) for
the finite volume element discretization described in Section~\ref{sec:FVD}.

We assume that we have a partition of $\Omega$ into the set of $N$ polygonal subdomains $\{\Omega_k\}_{k=1}^N$, such that they form a coarse triangulation or a coarse mesh of $\Omega$, which is shape regular in the sense of \cite{Brenner:1999:CNS}. Let $H_k=\mathrm{diam}(\Omega_k)$. We assume that each subdomain $\Omega_k$ lies in exactly one of the polygonal subdomains $\{D_j\}$ described earlier, and that none of their boundaries cross each other.
The interface
$$
\Gamma=\bigcup_{k=1}^N \partial\Omega_k\setminus \partial\Omega,
$$
which is the sum of all subdomain edges and subdomain vertices or crosspoints (not lying on the boundary $\partial\Omega$), plays a crucial role in the design of our preconditioner. We also assume that the primal mesh $T_h$ is perfectly aligned with the partitioning of $\Omega$, in other words, no edges of the primal mesh cross any edge of the coarse mesh. As a consequence, the coefficient matrix $A(x)$ restricted to a subdomain $\Omega_k$ is in $(W^{1,\infty}(\Omega_k))^4$, and hence (cf. (\ref{eq:subd_D_K}))
$$
    \|A\|_{W^{1,\infty}(\Omega_k)}\leq  \|A\|_{W^{1,\infty}(D_j)}\leq C_\Omega.
$$

Each subdomain $\Omega_k$ inherits its own local triangulation from the $T_h$, denote it by $T_h(\Omega_k)=\{\tau\in T_h: \tau\subset \Omega_k \}$.
Let $V_h(\Omega_k)$ be the space of continuous and piecewise linear functions over the triangulation $T_h(\Omega_k)$, which are zero on $\partial\Omega\cap\partial\Omega_k$, and let $V_{h,0}(\Omega_k):=V_h(\Omega_k)\cap H^1_0(\Omega_k)$. The local spaces are equipped with the bilinear form
$$a_k(u,v)=\int_{\Omega_k } \nabla u^T A(x) \nabla v\:dx.$$
We define the local projection operator  $\mathcal{P}_k: V_h(\Omega_k)\rightarrow V_{h,0}(\Omega_k)$ such that
$$
  a_k(\mathcal{P}_ku,v)=a_k(u,v)\qquad \forall v\in V_{h,0}(\Omega_k)
$$
and the local discrete harmonic extension operator  $\mathcal{H}_k: V_h(\Omega_k)\rightarrow V_h(\Omega_k)$ such that
$$
  \mathcal{H}_k u =u- \mathcal{P}_k u.
$$
Note that $\mathcal{H}_ku$ is equal to $u$ on the boundary $\partial\Omega_k$, and discrete harmonic inside $\Omega_k$ in the sense that
$$
 a_k(\mathcal{H}_ku, v)=0 \quad \forall v\in V_{h,0}(\Omega_k).
$$
The local and global spaces of discrete harmonic functions are then defined as
$$ W_k=\mathcal{H}_k V_h(\Omega_k) \quad \mbox{and} \quad
W=\mathcal{H}V_h=\{u \in V_h: u_{|\Omega_k}=\mathcal{H}_k u_{|\Omega_k}\},
$$
respectively.

We now define the subspaces required for the ASM preconditioner, cf. \cite{Smith:1996:DDP,Toselli:2005:DDM,Mathew:2008:DDM}. For each subdomain $\Omega_k$, the local subspace $V_k\subset V_h$ is defined as $V_{h,0}(\Omega_k)$ extending it by zero to the rest of the subdomains, i.e.
$$
  V_k = \{v\in V_h: v_{|\overline{\Omega}_k}\in V_{h,0}(\Omega_k) \quad
             \mbox{and} \quad v_{|\Omega\setminus \overline{\Omega}_k}=0 \}
$$

The coarse space $V_0\subset W$ is defined as the space of discrete harmonic functions which are piecewise linear over the subdomain edges.
The dimension of $V_0$ equals the cardinality of $\mathcal{V}=\bigcup_k \mathcal{V}_k$, where  $\mathcal{V}_k$ is the set of all subdomain vertices which are not on the boundary $\partial\Omega$, in other words its dimension is the number of crosspoints.

Finally, the local edge based subspaces which are defined as follows.
For each subdomain edge $\Gamma_{k l}$, which is the interface between $\Omega_k$ and $\Omega_l$, we let $V_{k l}\subset W$ be the local edge based subspace consisting of functions which may be nonzero inside $\Gamma_{k l}$, but zero on the rest of the interface $\Gamma$, and discrete harmonic in the subdomains. It is not difficult to see that the support of $V_{k l}$ is contained in $\overline{\Omega}_k \cup \overline{\Omega}_l$.

We have the following decompositions of the finite element spaces $W$ and $V_h$. Both the symmetric and the nonsymmetric variant of the preconditioner use the same decompositions:
\begin{eqnarray}
\nonumber
W&=&V_0+\sum_{\Gamma_{k l}\subset \Gamma} V_{k l},\\
\label{eq:space-decomp}
V_h&=& W+ \sum_{k=1}^N V_k
   =V_0+\sum_{\Gamma_{k l}\subset \Gamma} V_{k l} +  \sum_{k=1}^N V_k.
\end{eqnarray}
Note that the subspaces of $W$ are a-orthogonal to the subspaces $V_k, \; k=1,\ldots,N$.

\subsection{Symmetric preconditioner}
For the symmetric variant of the preconditioner, we define the coarse and the local operators
$T_k:V_h\rightarrow V_k$ for $k=0,1,\ldots,N$, as
$$
  a(T_ku,v)=a_h(u,v) \qquad \forall v\in V_k,
$$
and the local edge operators
$T_{k l}:V_h\rightarrow V_{k l}$ for all $\Gamma_{k l} \subset \Gamma$, as
$$
  a(T_{k l}u,v)=a_h(u,v) \qquad \forall v\in V_{k l}.
$$
Now, defining the additive Schwarz operator $T$ as
$$
 T=T_0 + \sum_{k=1}^N T_k + \sum_{\Gamma_{k l}\subset \Gamma} T_{k l}
$$
we can replace the variational equation (\ref{eq:FVdp}) by the equivalent system of equations
\begin{equation} \label{eq:ndp}
 T u_h=g
\end{equation}
in the operator form, where $g=g_0 + \sum_{k=1}^N g_k + \sum_{\Gamma_{k l}\subset \Gamma} g_{k l}$, $g_k=T_ku_h^*$ for $k=0,1,\ldots,N$, and $g_{k l} = T_{k l}u_h^*$ for $\Gamma_{k l}\subset\Gamma$, with $u_h^*$ being the exact solution of (\ref{eq:ndp}).

\begin{theorem}\label{thm:ASM-edge-sym}
There exists an $h_1$ such that, if $h\leq h_1$, then for any $u\in V_h$
$$
  a(Tu,Tu)\lesssim a(u,u),  \qquad
  a(Tu,u)\gtrsim \left(1+\log\left(\frac{H}{h}\right)\right)^{-2} a(u,u),
$$
where $H=\max_k(H_k)$ and $H_k=\mathrm{diam}(\Omega_k)$.
\end{theorem}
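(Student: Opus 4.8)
The plan is to reduce the statement to the two abstract hypotheses of the nonsymmetric additive Schwarz theory used by Cai \cite{Cai:1992:DDA}: a uniform boundedness estimate and a field-of-values coercivity estimate, which are precisely the two inequalities claimed and from which the GMRES convergence bound follows. The essential difficulty is that each operator $T_i$, with $i$ ranging over $0$, the subdomain indices $k$ and the edge indices $kl$, is defined with the symmetric form $a(\cdot,\cdot)$ on the left but the nonsymmetric finite volume form $a_h(\cdot,\cdot)$ on the right. I would therefore write $T_i=P_i+R_i$, where $P_i$ is the $a$-orthogonal projection onto the same subspace, characterized by $a(P_iu,v)=a(u,v)$ for $v$ in that subspace, so that the perturbation is governed by the consistency functional $\varepsilon(u,v):=a_h(u,v)-a(u,v)$ through $a(R_iu,v)=\varepsilon(u,v)$. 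Summing gives $T=P+R$, with $P=\sum_iP_i$ the genuine symmetric additive Schwarz operator and $R=\sum_iR_i$ a lower-order perturbation.

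The first ingredient, to be furnished by the estimates of Section~\ref{sec:techn-tools}, is a local and jump-robust consistency bound of the form $|\varepsilon(u,v)|\lesssim h\,|u|_{a,\omega}\,\|v\|_a$ whenever $v$ is supported in a region $\omega$, where $|\cdot|_{a,\omega}$ denotes the energy restricted to $\omega$ and $\|\cdot\|_a=a(\cdot,\cdot)^{1/2}$. This rests on the $W^{1,\infty}$ smoothness of $A$ inside each subdomain, see (\ref{eq:subd_D_K}), and on the observation closing Section~\ref{sec:FVD} that $a_h$ and $a$ coincide when $A$ is piecewise constant, so that their difference is driven only by the variation of $A$ within elements and is of order $h$. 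Two consequences follow at once: the continuity $|a_h(u,v)|\lesssim\|u\|_a\|v\|_a$, and, testing $a(R_iu,\cdot)$ against $R_iu$, the local bound $\|R_iu\|_a\lesssim h\,|u|_{a,\omega_i}$, whence by the finite overlap of the supports $\|Ru\|_a\lesssim h\,\|u\|_a$.

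For the boundedness estimate I would set $v_i=T_iu$ and use the identity $\sum_ia(v_i,v_i)=\sum_ia_h(u,v_i)=a_h(u,Tu)\lesssim\|u\|_a\,\|Tu\|_a$, together with the finite overlap (colouring) of the subspaces in the decomposition (\ref{eq:space-decomp}), which yields $a(Tu,Tu)\le N_c\sum_ia(v_i,v_i)$ for a bounded number of colours $N_c$; eliminating $\sum_ia(v_i,v_i)$ between the two inequalities gives $a(Tu,Tu)\lesssim a(u,u)$. The colouring uses that each $V_k$ and each edge space $V_{kl}$ meets only a bounded number of its neighbours, while $V_0$ contributes a single term.

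For the coercivity estimate I would first treat the symmetric part $P$. The core technical result, assembled from the abstract framework of Section~\ref{sec:ASM-covol} together with the edge and discrete-harmonic estimates of Section~\ref{sec:techn-tools}, is a stable decomposition: every $u\in V_h$ splits as $u=\sum_iu_i$ along (\ref{eq:space-decomp}) with $\sum_ia(u_i,u_i)\lesssim C_0^2\,a(u,u)$, where $C_0^2=(1+\log(H/h))^2$. This is the step I expect to be the main obstacle, since producing the logarithmic factor while keeping all constants independent of the coefficient jumps requires the jump-robust logarithmic bounds for the coarse piecewise-linear interpolant on the interface and for the edge cut-off of discrete harmonic functions. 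Granting it, the standard symmetric argument gives $a(Pu,u)\gtrsim C_0^{-2}a(u,u)$. Finally I would absorb the perturbation: since $|a(Ru,u)|\le\|Ru\|_a\|u\|_a\lesssim h\,a(u,u)$ by the bound above, one has $a(Tu,u)=a(Pu,u)+a(Ru,u)\gtrsim\left(C_0^{-2}-c\,h\right)a(u,u)$. Because $C_0^{-2}$ decays only polylogarithmically in $h$ while the perturbation decays linearly, there is an $h_1$ such that $c\,h\,(1+\log(H/h))^2\le\frac{1}{2}$ for $h\le h_1$; this is exactly the threshold $h_1$ in the statement, and for such $h$ the perturbation is absorbed to yield $a(Tu,u)\gtrsim C_0^{-2}a(u,u)$.
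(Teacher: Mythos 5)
Your proposal is correct and, at the top level, follows the same strategy as the paper: verify the hypotheses of the abstract additive Schwarz framework (Theorem~\ref{thm:ASM-covol-symm}) by combining the $O(h)$ consistency estimate $|a_h(u,v)-a(u,v)|\lesssim h\|u\|_a\|v\|_a$ (Proposition~\ref{prop:nonsym-part-h-bound}), a colouring bound on the interaction matrix (Proposition~\ref{C-S-spec-rad-bnd}), and the stable decomposition with $C_0^2=(1+\log(H/h))^2$ (Proposition~\ref{prop:low-bnd-sym}), and then choose $h_1$ so that the $O(h)$ perturbation is dominated by $C_0^{-2}$. You correctly identify the stable decomposition as the real content and the threshold condition $h\,C_0^2(h)\lesssim 1$ as the origin of $h_1$. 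The one genuine difference is organizational: you split $T=P+R$ with $P$ the exact $a$-orthogonal additive Schwarz operator, and you then need the operator bound $\|Ru\|_a\lesssim h\|u\|_a$ with a constant independent of the number of subspaces. The global consistency bound (\ref{eq:nonsym-part-h-bound}) alone does not give this: summing $\|R_iu\|_a\lesssim h\|u\|_a$ over all $i$ loses a factor growing with $N$, so your argument genuinely requires the \emph{localized} estimate $|\varepsilon(u,v)|\lesssim h\,|u|_{a,\omega_i}\|v\|_a$ for $v\in V_i$ together with finite overlap. That localized bound is true (the consistency error of the FVE form is assembled elementwise, so the proof of Lemma~3.1 in \cite{Ewing:2000:MFV} localizes), but it is a strictly stronger ingredient than Proposition~\ref{prop:nonsym-part-h-bound} and would need to be stated and proved. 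The paper avoids this entirely by never forming $R$ as an operator: it keeps the perturbation inside the scalar quantity $a_h(u,Tu)=a(u,Tu)+E_h(u,Tu)$ and bounds $E_h(u,Tu)\leq C_Eh\|u\|_a\|Tu\|_a\leq \beta_2C_Eh\|u\|_a^2$ using only the global bound and the already-established upper bound on $\|Tu\|_a$ (and similarly runs the Lions-type argument directly on $a(T_ku,u_k)=a_h(u,u_k)$ rather than on $P$). Your route buys a cleaner conceptual picture of $T$ as a perturbation of the symmetric Schwarz operator, at the price of an extra localization lemma; the paper's route is more economical in its hypotheses. Both yield the same constants up to inessential factors.
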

The theorem is proved using the abstract results of Section~\ref{sec:ASM-covol}, e.g. Theorem~\ref{thm:ASM-covol-symm}, and the propositions~\ref{prop:nonsym-part-h-bound},~\ref{C-S-spec-rad-bnd}, and~\ref{prop:low-bnd-sym} from Section~\ref{sec:techn-tools} below.

Then Theorem~\ref{thm:GMRES-estimate} gives an estimate of the convergence speed of the GMRES method applied for solving (\ref{eq:ndp}).

\subsection{Nonsymmetric preconditioner}
For the nonsymmetric variant of the preconditioner, we define a new set of coarse and local operators,
$S_k:V_h\rightarrow V_k$ for $k=0,1,\ldots,N$, as
$$
  a_h(S_ku,v)=a_h(u,v) \qquad \forall v\in V_k,
$$
and the local edge operators
$S_{k l}:V_h\rightarrow V_{k l}$  for all $\Gamma_{k l} \subset \Gamma$, by
$$
  a_h(S_{k l}u,v)=a_h(u,v) \qquad \forall v\in V_{k l}.
$$
Analogous to the symmetric case, the additive Schwarz operator is then given by
$$
 S=S_0 + \sum_{k=1}^N S_k + \sum_{\Gamma_{k l}\subset \Gamma} S_{k l},
$$
and the variational equation (\ref{eq:FVdp}) is then replaced by the following equivalent system of equations
\begin{equation} \label{eq:sdp}
 S u_h^*=\hat{g},
\end{equation}
where $\hat{g}=\hat{g}_0 + \sum_{k=1}^N \hat{g}_k + \sum_{\Gamma_{k l}\subset \Gamma} \hat{g}_{k l}$, $\hat{g}_k=S_ku_h^*$ for $k=0,1,\ldots,N$, and $\hat{g}_{k l} = S_{k l}u_h^*$ for $\Gamma_{k l}\subset \Gamma$, with $u_h^*$ being the exact solution of (\ref{eq:sdp}).

\begin{theorem}\label{thm:ASM-edge-nsym}
There exists an $h_1$ such that, if $h\leq h_1$, then for any $u\in V_h$
$$ a(Su,Su)\lesssim a(u,u), \quad a(Su,u)\gtrsim \left(1+\log\left(\frac{H}{h}\right)\right)^{-2} a(u,u),
$$
where $H=\max_k(H_k)$ and $H_k=\mathrm{diam}(\Omega_k)$.
\end{theorem}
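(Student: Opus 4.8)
The plan is to derive the bounds for the nonsymmetric operator $S$ from those already proved for the symmetric operator $T$ in Theorem~\ref{thm:ASM-edge-sym}, by showing that $S-T$ is of order $h$ in the energy norm $\|w\|_a:=a(w,w)^{1/2}$. The vehicle is the abstract framework of Section~\ref{sec:ASM-covol} in its nonsymmetric form (the counterpart of Theorem~\ref{thm:ASM-covol-symm}), whose hypotheses are a stable decomposition and local boundedness with respect to the symmetric form $a(\cdot,\cdot)$, together with an $O(h)$ bound on the nonsymmetric remainder $r:=a_h-a$. Since both variants use the same splitting (\ref{eq:space-decomp}), the symmetric stable-splitting constant and the coloring/boundedness estimates of Propositions~\ref{prop:low-bnd-sym} and~\ref{C-S-spec-rad-bnd} carry over unchanged; only the smallness of $r$, supplied by Proposition~\ref{prop:nonsym-part-h-bound}, is specific to the nonsymmetric analysis.

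The heart of the matter is a subspace-by-subspace comparison of the two operator families. For the local operators the defining relations give
$$a_h(S_k u,v)=a_h(u,v)=a(T_k u,v),\qquad v\in V_k,$$
whence, writing $a_h=a+r$, one gets $a(S_k u-T_k u,v)=-\,r(S_k u,v)$ for $v\in V_k$, and likewise for $S_0-T_0$ and each $S_{kl}-T_{kl}$ on its own subspace. Testing with $v=S_k u-T_k u$ and using $|r(w,v)|\lesssim h\,\|w\|_a\|v\|_a$ from Proposition~\ref{prop:nonsym-part-h-bound} gives $\|S_k u-T_k u\|_a\lesssim h\,\|S_k u\|_a$; a short absorption against the boundedness of $T_k$ then yields $\|S_k u\|_a\lesssim\|u\|_a$ and hence $\|S_k u-T_k u\|_a\lesssim h\,\|u\|_a$ for $h$ small. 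Summing the differences and controlling the overlap by Proposition~\ref{C-S-spec-rad-bnd} delivers the global estimate $\|(S-T)u\|_a\lesssim h\,\|u\|_a$.

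Both claims then follow. The upper bound is immediate from the triangle inequality, $\|Su\|_a\le\|Tu\|_a+\|(S-T)u\|_a\lesssim\|u\|_a$, i.e. $a(Su,Su)\lesssim a(u,u)$. For the lower bound I split $a(Su,u)=a(Tu,u)+a((S-T)u,u)$, bound the second term by Cauchy--Schwarz as $|a((S-T)u,u)|\lesssim h\,a(u,u)$, and combine with the symmetric lower bound of Theorem~\ref{thm:ASM-edge-sym} to obtain
$$a(Su,u)\ \gtrsim\ \Bigl[\bigl(1+\log(H/h)\bigr)^{-2}-C\,h\Bigr]\,a(u,u).$$

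The main obstacle is exactly this last display. The perturbation is only $O(h)$, whereas the symmetric coercivity degrades like $(1+\log(H/h))^{-2}$; the two terms are \emph{not} of the same order, so absorbing the perturbation requires $h\,(1+\log(H/h))^{2}$ to be small. This is what forces the threshold $h\le h_1$ in the statement: below it the bracket stays bounded below by a fixed multiple of $(1+\log(H/h))^{-2}$, giving the stated estimate. The delicate point is that the constant hidden in the $O(h)$ term must be independent of both $H/h$ and the coefficient jumps, which is precisely where the coefficient-robust form of Proposition~\ref{prop:nonsym-part-h-bound} and the weighted scaling built into the edge spaces $V_{kl}$ are indispensable, so that the preconditioner remains robust in the jumps.
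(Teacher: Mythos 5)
Your strategy differs from the paper's: the paper proves this theorem by feeding Propositions~\ref{prop:nonsym-part-h-bound}, \ref{C-S-spec-rad-bnd} and \ref{prop:low-bnd-sym} directly into the abstract nonsymmetric result (Theorem~\ref{thm:ASM-covol-nonsym}), which analyzes $S$ on its own via the $V_k$-ellipticity and boundedness of $a_h$ and the stable splitting, whereas you treat $S$ as an $O(h)$ perturbation of the already-analyzed $T$. That route is viable, and your final absorption step --- requiring $h\left(1+\log(H/h)\right)^2$ small so that the $O(h)$ perturbation does not swallow the polylogarithmically degraded coercivity --- is exactly the mechanism behind the paper's own constant $\gamma_1=\frac{\alpha^3}{M^2C_0^2}-\gamma_2 C_E h$ and the remark following Theorem~\ref{thm:ASM-covol-symm}. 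Your subspace-wise identities $a(S_ku-T_ku,v)=-E_h(S_ku,v)$ for $v\in V_k$, the bound $\|S_ku-T_ku\|_a\lesssim h\|S_ku\|_a$, and the absorption $\|S_ku\|_a\lesssim\|T_ku\|_a\lesssim\|u\|_a$ are all correct.

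There is, however, a genuine gap in the summation step. You pass to the individual estimate $\|S_ku-T_ku\|_a\lesssim h\|u\|_a$ and then sum over all subspaces using the coloring bound of Proposition~\ref{C-S-spec-rad-bnd} (i.e.\ Lemma~\ref{lem:sqtriangle}). But that lemma gives $\|\sum_k w_k\|_a^2\lesssim\sum_k\|w_k\|_a^2$, and inserting $\|w_k\|_a\lesssim h\|u\|_a$ produces $\|(S-T)u\|_a^2\lesssim h^2 N_{\mathrm{tot}}\|u\|_a^2$, where $N_{\mathrm{tot}}\sim H^{-2}$ is the number of subspaces. This extra factor is not harmless: it turns the perturbation into $O(h/H)$, and $(h/H)\left(1+\log(H/h)\right)^2$ is bounded but \emph{not} small uniformly in $H/h$ (it is about $4/e$ at $H/h=e$), so the lower bound cannot be salvaged by choosing $h_1$ alone. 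The repair is to keep the bound in the form $\|S_ku-T_ku\|_a\lesssim h\|T_ku\|_a$ and use the collective identity from the paper's proof, $\sum_k\|T_ku\|_a^2=a_h(u,Tu)\leq M\|u\|_a\|Tu\|_a\lesssim\|u\|_a^2$ (cf.\ (\ref{eq:sumT_k-bound}) and (\ref{eq:Tupper-bnd})), before applying Lemma~\ref{lem:sqtriangle}; this yields $\|(S-T)u\|_a\lesssim h\|u\|_a$ with a constant independent of $N$, $H/h$ and the jumps, after which your argument goes through.
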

The proof of this theorem is a direct consequence of the abstract results of Section~\ref{sec:ASM-covol}, e.g. Theorem~\ref{thm:ASM-covol-nonsym},  and the propositions~\ref{prop:nonsym-part-h-bound},~\ref{C-S-spec-rad-bnd}, and~\ref{prop:low-bnd-sym} from Section~\ref{sec:techn-tools}.

Finally utilizing Theorem~\ref{thm:GMRES-estimate} we get a bound for the convergence of the GMRES method applied for solving (\ref{eq:sdp}).

%
%
%
\section{The abstract framework} \label{sec:ASM-covol}
we formulate an abstract framework for the convergence analysis of additive Schwarz methods accelerated by the GMRES iteration (cf. \cite{Saad:1986:GGM}) for a general finite volume element discretization.


We consider a family of finite dimensional subspaces $V^h$ indexed by the parameter $h$, an inner product $a(\cdot,\cdot)$ and its induced norm $\|\cdot\|_a:=\sqrt{a(\cdot,\cdot)}$, and a family of discrete problems: Find $u_h\in V^h$
$$
   a_h(u_h,v)=f(v) \quad \forall v \in V^h,
$$
where $a_h(u,v)$ is a nonsymmetric bilinear form. We assume that the nonsymmetric bilinear form is a small perturbation of the symmetric one, in the sense that, for all $h\leq h_0$ (a constant),
$$
  E_h(u,v):=a_h(u,v)-a(u,v)
$$
converges to zero as $h$ tends to zero satisfying the following uniform bound:
\begin{equation} \label{eq:nonsym-part-h-bound}
 \exists \; C_E >0: \; \forall h<h_0 \mbox{ and } \forall u, v \in V^h, \quad |E_h(u,v)|\leq C_Eh\|u\|_a\|v\|_a,
\end{equation}
where $C_E$ is a constant independent of $h$.

Let the space $V^h$ be decomposed into its subspaces as follows,
$$
   V^h=\sum_{k=0}^N V_k
$$
where $V_k \subset V^h$ for $k=0,\ldots,N$.

\subsection{Symmetric preconditioner}
For $k=0,\ldots,N$, we define the projection operator $T_k:V^h\rightarrow V_k$ as
\begin{eqnarray} \label{eq:def_T_k}
  a(T_k u,v)=a_h(u,v) \qquad \forall v \in V_k.
\end{eqnarray}
Note that the bilinear form $a(u,v)$ is an inner product in $V^h$, hence $T_k$ is a well defined linear operator. Let the additive Schwarz operator $T: V^h\rightarrow V^h$ be given as
\begin{eqnarray}
 T=\sum_{k=0}^N T_k,
\end{eqnarray}
and the original problem be replaced by
\begin{equation} \label{eq:AF_Tueqg}
  Tu_h=g,
\end{equation}

where $g=\sum_{k=0}^N g_k$ and $g_k=T_k u_h$.
Note that $T_k$ and $T$ are in general nonsymmetric. To solve this system we use the GMRES iteration, cf. \cite{Saad:1986:GGM}, whose convergence estimates  are based on the two parameters, the smallest eigenvalue of the symmetric part of the operator and the norm of the operator,
\begin{eqnarray} \label{eq:GMRES-abst-const}
  \beta_1=\inf_{u\not =0} \frac{a(Tu,u)}{\|u\|_a^2}, \qquad \beta_2=\sup_{u\not =0}   \frac{\|T u\|_a}{\|u\|_a }.
\end{eqnarray}
We state the classical theorem.
\begin{theorem} [Eisenstat-Elman-Schultz \cite{Eisenstat:1983:VIM})] \label{thm:GMRES-estimate}
If $\beta_1>0$, then the GMRES method for solving the linear system (\ref{eq:AF_Tueqg}) converges for any starting value $u_0\in V^h$ with the following estimate:
$$
 \|g-T u_m\|_a\leq\left(1-\frac{\beta_1^2}{\beta_2^2}\right)^{m/2}\|g-Tu_0\|_a,
$$
where $u_m$ is the m-th iterate of the GMRES method.
\end{theorem}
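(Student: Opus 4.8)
The plan is to combine the minimal-residual optimality of GMRES with a single-step energy estimate built from the two parameters $\beta_1$ and $\beta_2$. Recall that at step $m$ the GMRES iterate $u_m$ minimizes $\|g-Tu\|_a$ over the affine Krylov space $u_0+\mathcal{K}_m(T,r_0)$, where $r_0=g-Tu_0$ and $\mathcal{K}_m(T,r_0)=\mathrm{span}\{r_0,Tr_0,\ldots,T^{m-1}r_0\}$; equivalently $\|g-Tu_m\|_a=\min\{\|p(T)r_0\|_a: p\in\Pi_m,\ p(0)=1\}$ over polynomials $p$ of degree at most $m$. Before invoking this I would record that $\beta_1\leq\beta_2$, since the Cauchy--Schwarz inequality in the $a$-inner product gives $a(Tu,u)\leq\|Tu\|_a\|u\|_a\leq\beta_2\|u\|_a^2$; together with $\beta_1>0$ this forces $0\leq 1-\beta_1^2/\beta_2^2<1$, so the asserted contraction factor is genuinely below one, and also $\beta_2>0$ so that the scalar introduced below is well defined.

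The heart of the argument is a one-step estimate. For any $w\in V^h$ with residual $r=g-Tw$ and any scalar $\alpha$, the updated vector $w+\alpha r$ has residual $r-\alpha Tr$, and
\[
\|r-\alpha Tr\|_a^2=\|r\|_a^2-2\alpha\,a(Tr,r)+\alpha^2\|Tr\|_a^2\leq\bigl(1-2\alpha\beta_1+\alpha^2\beta_2^2\bigr)\|r\|_a^2,
\]
using $a(Tr,r)\geq\beta_1\|r\|_a^2$ and $\|Tr\|_a\leq\beta_2\|r\|_a$ from (\ref{eq:GMRES-abst-const}). Minimizing the quadratic in $\alpha$ at $\alpha_*=\beta_1/\beta_2^2$ yields $\|r-\alpha_* Tr\|_a^2\leq(1-\beta_1^2/\beta_2^2)\|r\|_a^2$.

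Next I would manufacture an explicit competitor in the Krylov space to which this estimate applies repeatedly. Set $\tilde{u}_0=u_0$ and $\tilde{u}_{j+1}=\tilde{u}_j+\alpha_*\tilde{r}_j$ with $\tilde{r}_j=g-T\tilde{u}_j$. An easy induction gives $\tilde{r}_j=(I-\alpha_* T)^j r_0=p_j(T)r_0$ with $p_j$ of degree $j$ and $p_j(0)=1$, whence $\tilde{u}_m-u_0=\alpha_*\sum_{j=0}^{m-1}\tilde{r}_j\in\mathcal{K}_m(T,r_0)$, so $\tilde{u}_m$ is admissible in the GMRES minimization at step $m$. Applying the one-step estimate $m$ times gives $\|\tilde{r}_m\|_a\leq(1-\beta_1^2/\beta_2^2)^{m/2}\|r_0\|_a$, and hence by optimality
\[
\|g-Tu_m\|_a\leq\|g-T\tilde{u}_m\|_a=\|\tilde{r}_m\|_a\leq\left(1-\frac{\beta_1^2}{\beta_2^2}\right)^{m/2}\|g-Tu_0\|_a,
\]
which is the claimed bound.

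The only genuine subtlety, and the step I would handle most carefully, is the passage from the single-step contraction to the $m$-step bound: because $T$ is nonsymmetric one cannot diagonalize it and simply raise eigenvalues to the $m$-th power. The comparison-sequence device sidesteps this by exhibiting a concrete Krylov element whose residual is controlled by a product of identical one-step factors, after which the minimal-residual property of GMRES delivers the estimate with no further spectral information needed.
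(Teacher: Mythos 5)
Your proof is correct. The paper itself offers no proof of this theorem --- it is stated as a classical result and attributed to \cite{Eisenstat:1983:VIM} --- so there is no in-paper argument to compare against; your route (the one-step contraction $\|r-\alpha_* Tr\|_a^2\leq(1-\beta_1^2/\beta_2^2)\|r\|_a^2$ with $\alpha_*=\beta_1/\beta_2^2$, propagated along a Richardson comparison sequence inside the Krylov space and then dominated by the minimal-residual optimality of GMRES) is precisely the standard proof of the Eisenstat--Elman--Schultz bound found in that reference and in the domain decomposition literature. The only details worth making explicit are that $\alpha_*>0$ is needed for the cross-term inequality $-2\alpha\,a(Tr,r)\leq-2\alpha\beta_1\|r\|_a^2$ to go the right way (which your choice satisfies), and that the GMRES being analyzed is the variant formulated in the $a$-inner product, which is indeed the one the paper uses.
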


For any $u \in V^h$, we assume that there are functions $u_i\in V_i$, $i\ldots,N$, such that the following hold: There exists a positive constant $C_0$ (which may depend on the mesh parameters) such that
\begin{eqnarray}\label{eq:ASM-splitting}
  u = \sum_{k=0}^N u_k
\end{eqnarray}
and
\begin{eqnarray} \label{eq:ASM-split-est}
  \sum_{k=0}^N a(u_k,u_k) \leq C_0^2 a(u,u).
\end{eqnarray}

We also assume the following strengthened Cauchy-Schwarz inequality:
For any $k,l=1,\ldots,N$, let $\epsilon_{k l}$ be the minimal nonnegative constants such that
\begin{eqnarray} \label{eq:ass-CS-loc-ineq}
  a(u_k,u_l)\leq \epsilon_{k l}\|u_k\|_a \|u_l\|_a \qquad  u_k\in V_k, u_l\in V_l.
\end{eqnarray}
Let $\rho(\mathcal{E})$ be the spectral radius of the $N \times N$ symmetric matrix $\mathcal{E}=(\epsilon_{k l})_{k,l=1}^N$.

The following lemmas can be given.
\begin{lemma}
For any $M\in(1,2)$ there exists an $h_1\leq h_0$ such that if $h < h_1$ then the bilinear form $a_h(u,v)$ is uniformly bounded with respect to the inner product $a(u,v)$, i.e.
\begin{equation} \label{eq:CVform-bounded}
 \forall u, v \in V^h \quad |a_h(u,v)|\leq M\|u\|_a\|v\|_a.
\end{equation}
\end{lemma}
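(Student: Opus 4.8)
The plan is to exploit the assumed perturbation bound (\ref{eq:nonsym-part-h-bound}) together with the Cauchy--Schwarz inequality for the inner product $a(\cdot,\cdot)$, and then absorb the perturbation into the constant $M$ by taking $h$ sufficiently small.

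First I would write, for arbitrary $u,v\in V^h$, the splitting $a_h(u,v)=a(u,v)+E_h(u,v)$, which is simply the definition of $E_h$. Since $a(\cdot,\cdot)$ is an inner product on $V^h$ with induced norm $\|\cdot\|_a$, the Cauchy--Schwarz inequality gives $|a(u,v)|\le\|u\|_a\|v\|_a$. Combining this with the triangle inequality and the uniform bound (\ref{eq:nonsym-part-h-bound}), valid for every $h<h_0$, yields
$$
|a_h(u,v)|\le|a(u,v)|+|E_h(u,v)|\le(1+C_Eh)\,\|u\|_a\|v\|_a .
$$

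It then remains to choose $h_1$ so that the factor $1+C_Eh$ does not exceed $M$. Since $M\in(1,2)$, the quantity $(M-1)/C_E$ is positive, so I would set $h_1=\min\{h_0,(M-1)/C_E\}$; by construction $h_1\le h_0$. For any $h<h_1$ we then have $C_Eh<M-1$, hence $1+C_Eh<M$, and the desired bound $|a_h(u,v)|\le M\|u\|_a\|v\|_a$ follows for all $u,v\in V^h$.

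There is essentially no serious obstacle here: the statement is an immediate consequence of the standing assumption (\ref{eq:nonsym-part-h-bound}). The only point worth checking is that the threshold $h_1$ can be chosen uniformly in $u$ and $v$; this is guaranteed because both the Cauchy--Schwarz estimate and the perturbation bound factor through the product $\|u\|_a\|v\|_a$, and because $C_E$ in (\ref{eq:nonsym-part-h-bound}) is independent of $h$ and of the functions involved.
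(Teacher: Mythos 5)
Your proof is correct and follows essentially the same route as the paper: split $a_h=a+E_h$, bound $|a(u,v)|$ by Cauchy--Schwarz and $|E_h(u,v)|$ by the perturbation assumption to obtain the factor $(1+C_Eh)$, and choose $h_1=\min\{h_0,(M-1)/C_E\}$. If anything, your version is slightly more careful than the paper's, since you bound $|a_h(u,v)|$ with absolute values throughout rather than only $a_h(u,v)$ itself.
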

\begin{proof}
It follows from the assumption (\ref{eq:nonsym-part-h-bound}) that
\begin{eqnarray*}
 a_h(u,v) &=& a(u,v) + E_h(u,v) \\ &\leq& (1+C_Eh)\|u\|_a\|v\|_a
 \leq (1+C_Eh_1)\|u\|_a\|v\|_a.
\end{eqnarray*}
Taking $h_1=\min((M-1)/C_E,h_0)$ ends the proof.
\end{proof}
\begin{lemma}
For any $\alpha \in (0,1)$, there exists an $h_1\leq h_0$ such that if $h<h_1$ then the bilinear form $a_h(u,v)$ is uniformly $V^h$-elliptic in the $\parallel \cdot \parallel_a$-norm, i.e.
\begin{equation} \label{eq:CVform-Vh-elliptic}
 \forall u\in V^h \quad a_h(u,u)\geq \alpha\|u\|_a^2.
\end{equation}
\end{lemma}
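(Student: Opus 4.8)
The plan is to mirror the proof of the preceding boundedness lemma, exploiting that the inner product $a(\cdot,\cdot)$ coincides with the squared $\|\cdot\|_a$-norm on the diagonal and that the nonsymmetric perturbation $E_h$ is uniformly controlled by (\ref{eq:nonsym-part-h-bound}). First I would write, for any $u\in V^h$,
$$
 a_h(u,u) = a(u,u) + E_h(u,u) = \|u\|_a^2 + E_h(u,u),
$$
using the definition $E_h(u,v)=a_h(u,v)-a(u,v)$ together with $a(u,u)=\|u\|_a^2$.

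Next I would apply the uniform bound (\ref{eq:nonsym-part-h-bound}) with the choice $v=u$, which gives $|E_h(u,u)|\leq C_E h\,\|u\|_a^2$ for every $h<h_0$. Combining this estimate with the previous identity yields the lower bound
$$
 a_h(u,u) \geq \|u\|_a^2 - C_E h\,\|u\|_a^2 = (1-C_E h)\,\|u\|_a^2.
$$

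Finally, to secure the prescribed coercivity constant $\alpha\in(0,1)$, I would choose $h_1=\min\left((1-\alpha)/C_E,\;h_0\right)$. Then for every $h<h_1$ one has $C_E h<1-\alpha$, hence $1-C_E h>\alpha$, and the displayed lower bound immediately gives $a_h(u,u)\geq\alpha\|u\|_a^2$ for all $u\in V^h$, which is the asserted $V^h$-ellipticity.

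There is no genuine obstacle here: the argument is a one-line consequence of the perturbation estimate (\ref{eq:nonsym-part-h-bound}) and runs entirely parallel to the boundedness lemma, the only differences being the sign in which the perturbation enters the diagonal estimate and the resulting threshold $(1-\alpha)/C_E$ in place of $(M-1)/C_E$. The single point requiring attention is to keep the threshold at or below $h_0$, so that the hypothesis (\ref{eq:nonsym-part-h-bound}) actually applies on the admissible range of $h$; the stated choice of $h_1$ guarantees this.
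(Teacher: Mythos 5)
Your proof is correct and follows essentially the same route as the paper: both split $a_h(u,u)=a(u,u)+E_h(u,u)$, apply the perturbation bound (\ref{eq:nonsym-part-h-bound}) on the diagonal, and choose $h_1=\min\left((1-\alpha)/C_E,\,h_0\right)$ to obtain the coercivity constant $\alpha$. No issues.
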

\begin{proof}
By the assumption (\ref{eq:nonsym-part-h-bound}), we have
\begin{eqnarray*}
 a(u,u)\leq a_h(u,u) +|E_h(u,u)|\leq a_h(u,u)+C_Eha(u,u).
\end{eqnarray*}
If $h<h_1\leq h_0$ and $C_Eh_1\leq 1-\alpha$, then
$$
  a_h(u,u)\geq (1-C_Eh_1\,)a(u,u)\geq\: \alpha a(u,u),
$$
and the proof follows.
\end{proof}

We now state the main theorem of this section.
\begin{theorem} \label{thm:ASM-covol-symm}
There exists an $h_1\leq h_0$ such that if $h < h_1$ then
 \begin{eqnarray}
  a(Tu,Tu)\leq \beta_2^2 a(u,u), \label{eq:Tupper-bnd}\\
  a(Tu,u)\geq \beta_1 a(u,u), \label{eq:Tlower-bnd}
\end{eqnarray}
where $\beta_2=(2M(1+\rho(\mathcal{E})))$ and $\beta_1=(\alpha^2C_0^{-2} -\beta_2 C_E h)$.
\end{theorem}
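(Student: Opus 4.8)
The plan is to establish the upper bound (\ref{eq:Tupper-bnd}) and the lower bound (\ref{eq:Tlower-bnd}) separately, using the two preceding lemmas (boundedness (\ref{eq:CVform-bounded}) and ellipticity (\ref{eq:CVform-Vh-elliptic})) together with the stable splitting assumption (\ref{eq:ASM-splitting})--(\ref{eq:ASM-split-est}) and the strengthened Cauchy--Schwarz inequality (\ref{eq:ass-CS-loc-ineq}). The first observation, used throughout, is that each local operator is uniformly bounded: testing the definition $a(T_k u,v)=a_h(u,v)$ with $v=T_k u\in V_k$ and invoking (\ref{eq:CVform-bounded}) gives $\|T_k u\|_a^2 = a_h(u,T_ku)\leq M\|u\|_a\|T_ku\|_a$, hence $\|T_ku\|_a\leq M\|u\|_a$ for every $k$.

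For the upper bound I would split $Tu = T_0u + \sum_{k=1}^N T_ku$ and treat the two pieces by the triangle inequality. The coarse piece is controlled directly by $\|T_0u\|_a\leq M\|u\|_a$. For the remaining sum, the strengthened Cauchy--Schwarz inequality (\ref{eq:ass-CS-loc-ineq}) together with the spectral-radius bound yields $\|\sum_{k=1}^N T_ku\|_a^2 \leq \rho(\mathcal{E})\sum_{k=1}^N\|T_ku\|_a^2$, while the same identity as above with (\ref{eq:CVform-bounded}) gives $\sum_{k=1}^N\|T_ku\|_a^2 = a_h(u,\sum_{k=1}^N T_ku)\leq M\|u\|_a\|\sum_{k=1}^N T_ku\|_a$. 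Combining these two estimates and cancelling one factor of $\|\sum_{k=1}^N T_ku\|_a$ produces $\|\sum_{k=1}^N T_ku\|_a\leq M\rho(\mathcal{E})\|u\|_a$, so that $\|Tu\|_a\leq M(1+\rho(\mathcal{E}))\|u\|_a\leq \beta_2\|u\|_a$; squaring gives (\ref{eq:Tupper-bnd}), with the factor $2$ in $\beta_2$ left as slack.

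The lower bound is the heart of the argument. Starting from ellipticity (\ref{eq:CVform-Vh-elliptic}) and the stable splitting $u=\sum_{k=0}^N u_k$, I would write $\alpha\|u\|_a^2 \leq a_h(u,u) = \sum_{k=0}^N a_h(u,u_k)=\sum_{k=0}^N a(T_ku,u_k)$, then apply Cauchy--Schwarz in the index $k$ and the bound (\ref{eq:ASM-split-est}) to obtain $\alpha\|u\|_a^2\leq C_0\|u\|_a(\sum_{k=0}^N\|T_ku\|_a^2)^{1/2}$, that is, $\sum_{k=0}^N\|T_ku\|_a^2\geq \alpha^2C_0^{-2}\|u\|_a^2$. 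The crucial identity is then $\sum_{k=0}^N\|T_ku\|_a^2=\sum_{k=0}^N a_h(u,T_ku)=a_h(u,Tu)$, which upon splitting off the symmetric part via $E_h$ becomes $a_h(u,Tu)=a(Tu,u)+E_h(u,Tu)$. Rearranging gives $a(Tu,u)=\sum_{k=0}^N\|T_ku\|_a^2 - E_h(u,Tu)\geq \alpha^2C_0^{-2}\|u\|_a^2 - |E_h(u,Tu)|$, and the perturbation bound (\ref{eq:nonsym-part-h-bound}) combined with the already-proved upper bound yields $|E_h(u,Tu)|\leq C_Eh\|u\|_a\|Tu\|_a\leq \beta_2 C_E h\,\|u\|_a^2$. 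This delivers exactly $a(Tu,u)\geq(\alpha^2C_0^{-2}-\beta_2 C_E h)\,a(u,u)=\beta_1 a(u,u)$.

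The main obstacle is the control of the nonsymmetric defect $E_h(u,Tu)$ in the lower bound: it is this term that forces $h$ to be small, since $\beta_1$ is positive only when $\beta_2 C_E h < \alpha^2 C_0^{-2}$, which fixes the threshold $h_1$. Choosing $h_1\leq h_0$ small enough that both lemmas hold and $\beta_1>0$ completes the proof; all remaining steps are the standard algebraic manipulations of abstract Schwarz theory and the spectral-radius bound $\mathbf{x}^T\mathcal{E}\mathbf{x}\leq\rho(\mathcal{E})|\mathbf{x}|^2$.
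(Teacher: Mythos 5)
Your proof is correct and follows essentially the same route as the paper: the identity $\sum_k\|T_ku\|_a^2=a_h(u,Tu)$ combined with boundedness for the upper bound, and ellipticity plus the stable splitting and the perturbation bound on $E_h(u,Tu)$ for the lower bound. The only cosmetic difference is in the upper bound, where you separate the coarse term by the triangle inequality instead of the paper's $\|a+b\|^2\le 2\|a\|^2+2\|b\|^2$ step (Lemma~\ref{lem:sqtriangle}), which in fact yields the slightly sharper constant $M(1+\rho(\mathcal{E}))$ before relaxing to $\beta_2$.
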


\begin{remark} In some cases the constant $C_0$ in (\ref{eq:ASM-split-est})  may depend  on $h$ ($C_0=C_0(h))$ then $C_0(h)$ cannot grow  too fast with decreasing $h$, otherwise $\beta_1$ may become negative and our theory would not work, e.g.  if $\rho(\mathcal{E})$ is independent of $h$ which is usually the case in ASM methods, then it would be sufficient if
$
  \lim_{h\rightarrow 0} C_0^2(h)h =0,
$
because
then there exists an $h_1\leq h_0$ such that  $\beta_1$ is  positive for any
$h < h_1$.
\end{remark}

Before we prove Theorem~\ref{thm:ASM-covol-symm}, we need the following lemma.
\begin{lemma} \label{lem:sqtriangle}
Let  $u_k\in V_k$ for $k=0,\ldots,N$, then
$$
  \|\sum_{k=0}^N u_k\|_a^2\leq 2(1+\rho(\mathcal{E}))\sum_k\|u_k\|_a^2,
$$
where $\rho(\mathcal{E})$ is the spectral radius of the matrix $\mathcal{E}=(\epsilon_{k l})_{k,l=1}^N$.
\end{lemma}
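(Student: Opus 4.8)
The plan is to split off the coarse index $k=0$ and then apply a strengthened Cauchy--Schwarz argument to the remaining terms, reducing the estimate to a spectral bound on $\mathcal{E}$. First I would write $u=\sum_{k=0}^N u_k = u_0 + w$ with $w=\sum_{k=1}^N u_k$, and use the elementary inequality $(a+b)^2\le 2a^2+2b^2$ together with the triangle inequality for $\|\cdot\|_a$ to obtain
$$
  \|u\|_a^2 \le 2\|u_0\|_a^2 + 2\|w\|_a^2 .
$$
The coarse term $u_0$ is thereby isolated, which is necessary because the matrix $\mathcal{E}=(\epsilon_{k l})_{k,l=1}^N$ in (\ref{eq:ass-CS-loc-ineq}) is indexed only over $k,l\ge 1$ and so cannot accommodate the index $0$.

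Next I would estimate $\|w\|_a^2$ by expanding the bilinear form and invoking the strengthened Cauchy--Schwarz inequality (\ref{eq:ass-CS-loc-ineq}):
$$
  \|w\|_a^2 = \sum_{k,l=1}^N a(u_k,u_l) \le \sum_{k,l=1}^N \epsilon_{k l}\|u_k\|_a\|u_l\|_a = x^T \mathcal{E}\, x ,
$$
where $x=(\|u_1\|_a,\ldots,\|u_N\|_a)^T$. Since $\mathcal{E}$ is symmetric, the Rayleigh quotient bound gives $x^T \mathcal{E}\, x \le \lambda_{\max}(\mathcal{E})\,\|x\|_2^2 \le \rho(\mathcal{E})\,\|x\|_2^2 = \rho(\mathcal{E})\sum_{k=1}^N\|u_k\|_a^2$, where the middle inequality uses that the largest eigenvalue is dominated by the spectral radius.

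Combining the two displays yields
$$
  \|u\|_a^2 \le 2\|u_0\|_a^2 + 2\rho(\mathcal{E})\sum_{k=1}^N\|u_k\|_a^2 .
$$
Finally, since $\rho(\mathcal{E})\ge 0$, I would bound the factor $2$ on the $u_0$ term by $2(1+\rho(\mathcal{E}))$ and the factor $2\rho(\mathcal{E})$ on the remaining sum by $2(1+\rho(\mathcal{E}))$, so that both contributions carry the common constant and recombine into $2(1+\rho(\mathcal{E}))\sum_{k=0}^N\|u_k\|_a^2$, which is the claim.

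The only genuinely substantive step is the passage from the strengthened Cauchy--Schwarz inequality to the spectral radius, that is, recognizing the double sum as a quadratic form $x^T\mathcal{E}\,x$ and bounding it by $\rho(\mathcal{E})\|x\|_2^2$; everything else is bookkeeping. The mild subtlety to watch is the separate treatment of the coarse space, whose index $0$ lies outside the range of $\mathcal{E}$: handling it through the crude splitting $\|u_0+w\|_a^2 \le 2\|u_0\|_a^2+2\|w\|_a^2$ is precisely what forces the factor $2$ appearing in the statement.
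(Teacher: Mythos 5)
Your proof is correct and follows essentially the same route as the paper's: split off the coarse term via $\|u_0+w\|_a^2\le 2\|u_0\|_a^2+2\|w\|_a^2$, expand $\|w\|_a^2$ using the strengthened Cauchy--Schwarz inequality (\ref{eq:ass-CS-loc-ineq}), and bound the resulting quadratic form in $x=(\|u_1\|_a,\ldots,\|u_N\|_a)^T$ by $\rho(\mathcal{E})\|x\|_2^2$. The only cosmetic difference is that the paper phrases the last step as an $l_2$ Cauchy--Schwarz bound $\rho(\mathcal{E})\sqrt{\sum_k\|u_k\|_a^2}\sqrt{\sum_l\|u_l\|_a^2}$ rather than your Rayleigh-quotient formulation, which are equivalent for the symmetric matrix $\mathcal{E}$.
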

\begin{proof}
We see that
\begin{eqnarray*}
  \|\sum_{k=0}^N u_k\|_a^2&\leq& 2\|u_0\|_a^2+2\|\sum_{k=1}^N u_k\|_a^2.
\end{eqnarray*}
Using (\ref{eq:ass-CS-loc-ineq}) and a Schwarz inequality in the $l_2$-norm we get

\begin{eqnarray*}
  \|\sum_{k=1}^N u_k\|_a^2&=&\sum_{k,l=1}^Na( u_k,u_l) \\
  &\leq& \sum_{k,l=1}^N  \epsilon_{k l}\|u_k\|_a\|u_l\|_a\\
  &\leq& \rho(\mathcal{E})\sqrt{ \sum_{k=1}^N  \|u_k\|_a^2}
  \sqrt{ \sum_{l=1}^N  \|u_l\|_a^2}\\&=&\rho(\mathcal{E}) \sum_{k=1}^N  \|u_k\|_a^2,
\end{eqnarray*}
and the proof follows.
\end{proof}

We now give the proof of Theorem \ref{thm:ASM-covol-symm}. It follows from Lemma~\ref{lem:sqtriangle} that
\begin{eqnarray*}
 a(Tu,Tu)&=&\|\sum_k T_ku,\sum_k T_k u\|_a\leq
2(1+\rho(\mathcal{E}))\sum_k\|T_ku\|_a^2.
\end{eqnarray*}
By (\ref{eq:def_T_k}) and (\ref{eq:CVform-bounded}), we get

\begin{eqnarray}
\sum_{k=0}^N\|T_ku\|_a^2=\sum_{k=0}^N a(T_k u,T_k u)
 &=& \sum_{k=0}^N  a_h(u,T_ku) \nonumber \\
 &=& a_h(u,T u) \label{eq:sumT_k-bound}\\
 &\leq& M \|u\|_a\|T u\|_a. \nonumber
\end{eqnarray}
The upper bound, cf. (\ref{eq:Tupper-bnd}), then follows with $\beta_2=(2M(1+\rho(\mathcal{E})))$.



To prove the lower bound, cf. (\ref{eq:Tlower-bnd}), we start with the splitting of $u\in V^h$, cf. (\ref{eq:ASM-splitting}), such that (\ref{eq:ASM-split-est}) holds.
Then using (\ref{eq:CVform-Vh-elliptic}), (\ref{eq:def_T_k}), a Schwarz inequality, and (\ref{eq:ASM-split-est}), we get

\begin{eqnarray*}
 \alpha a(u,u)
\leq a_h(u,u)
= \sum_{k=0}^N a_h(u,u_k)
&=&\sum_{k=0}^N a(T_k u,u_k)\\
&\leq& \sum_{k=0}^N \|T_k u\|_a \|u_k\|_a \\
&\leq & \sqrt{\sum_{k=0}^N \|T_k u\|_a^2} \; \sqrt{\sum_{k=0}^N\|u_k\|_a^2}\\
&\leq& C_0  \sqrt{\sum_{k=0}^N \|T_k u\|_a^2} \; \|u\|_a.
\end{eqnarray*}
This and (\ref{eq:sumT_k-bound}) then yield
\begin{eqnarray*}
 \alpha^2 a(u,u)
 &\leq&C_0^2 \sum_k\|T_k u\|_a^2= C_0^2a_h(u,Tu).
\end{eqnarray*}
Finally, from the assumption (\ref{eq:nonsym-part-h-bound}) and the upper bound (\ref{eq:Tupper-bnd}), we get
\begin{eqnarray*}
a_h(u,Tu) = a(u,Tu)+E_h(u,Tu) &\leq& a(u,Tu) + C_E h\|u\|_a\|Tu\|_a \\
&\leq& a(u,Tu) +\beta_2 C_E h\|u\|_a^2.
\end{eqnarray*}
Hence,
$$
  a(Tu,u) \geq (\alpha^2C_0^{-2} -\beta_2 C_E h)a(u,u).
$$
Taking $\beta_1=(\alpha^2C_0^{-2} -\beta_2 C_E h)$
we get the lower bound in (\ref{eq:Tlower-bnd}).

\subsection{Nonsymmetric preconditioner}
For $k=0,\ldots,N$, we define the projection operators $S_k: V^h\rightarrow V_k$ as
\begin{eqnarray} \label{eq:def_S_k}
  a_h(S_k u,v)=a_h(u,v) \qquad \forall v \in V_k.
\end{eqnarray}
Note that the bilinear form $a_h(u,v)$ is $V_k$-elliptic, cf. (\ref{eq:CVform-Vh-elliptic}), so $S_k$ is a well defined linear operator.
Now, introducing the additive Schwarz operator $S: V^h\rightarrow V^h$ as
\begin{eqnarray*}
 S=\sum_{k=0}^N S_k,
\end{eqnarray*}
we replace the original problem with
$$
  Su_h=g,
$$
where $g=\sum_{k=0}^N g_k$ and $g_k=S_k u_h$.

The main theorem of this section, in which we bound the constants from the estimate of the convergence speed of GMRES, cf. (\ref{eq:GMRES-abst-const}) and Theorem~\ref{thm:GMRES-estimate}, is the following:
\begin{theorem} \label{thm:ASM-covol-nonsym}
There exists $h_1<h_0$ such that for any $h<h_1$, the following bounds hold.
 \begin{eqnarray*}
  a(Su,Su)\leq \gamma_2^2 a(u,u),\\ 
  a(Su,u)\geq \gamma_1 a(u,u)
\end{eqnarray*}
where
$\gamma_2=\frac{2M}{\alpha}(1+\rho(\mathcal{E}))$ and
$\gamma_1=
\frac{\alpha^3}{M^2 C_0^2} - \gamma_2C_E h$, and, as before, $\rho(\mathcal{E})$ is the spectral radius of the matrix $\mathcal{E}=(\epsilon_{k l})_{k,l}^N$.
\end{theorem}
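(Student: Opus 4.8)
The plan is to mirror the argument used for the symmetric operator $T$ in Theorem~\ref{thm:ASM-covol-symm}, but to exploit the defining identity (\ref{eq:def_S_k}) of $S_k$, namely $a_h(S_k u,v)=a_h(u,v)$ for all $v\in V_k$, in place of (\ref{eq:def_T_k}). Throughout I fix $h<h_1\le h_0$ small enough that both bounding lemmas hold, so that $|a_h(u,v)|\le M\|u\|_a\|v\|_a$, cf. (\ref{eq:CVform-bounded}), and $a_h(u,u)\ge\alpha\|u\|_a^2$, cf. (\ref{eq:CVform-Vh-elliptic}), are available. Crucially, the upper bound must be proved first, since its conclusion is needed to control the nonsymmetric perturbation in the lower bound.

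For the upper bound, first I would apply Lemma~\ref{lem:sqtriangle} to $Su=\sum_k S_k u$ to obtain $a(Su,Su)\le 2(1+\rho(\mathcal{E}))\sum_k\|S_k u\|_a^2$. The key step is then to bound $\sum_k\|S_k u\|_a^2$: testing (\ref{eq:def_S_k}) with $v=S_k u\in V_k$ gives $a_h(S_k u,S_k u)=a_h(u,S_k u)$, and ellipticity (\ref{eq:CVform-Vh-elliptic}) yields $\alpha\|S_k u\|_a^2\le a_h(u,S_k u)$. Summing over $k$ and using boundedness (\ref{eq:CVform-bounded}) gives $\alpha\sum_k\|S_k u\|_a^2\le a_h(u,Su)\le M\|u\|_a\|Su\|_a$. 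Combining the two inequalities and cancelling one factor of $\|Su\|_a$ produces $\|Su\|_a\le \frac{2M}{\alpha}(1+\rho(\mathcal{E}))\|u\|_a$, i.e. the stated $\gamma_2$.

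For the lower bound, I would start from a splitting $u=\sum_k u_k$, $u_k\in V_k$, satisfying (\ref{eq:ASM-split-est}). Ellipticity gives $\alpha a(u,u)\le a_h(u,u)=\sum_k a_h(u,u_k)$, and since $u_k\in V_k$ the defining identity (\ref{eq:def_S_k}) lets me replace $a_h(u,u_k)$ by $a_h(S_k u,u_k)$. Applying boundedness, a discrete Cauchy--Schwarz inequality in $\ell_2$, and (\ref{eq:ASM-split-est}) then yields $\alpha a(u,u)\le M C_0\left(\sum_k\|S_k u\|_a^2\right)^{1/2}\|u\|_a$, hence $\sum_k\|S_k u\|_a^2\ge \frac{\alpha^2}{M^2 C_0^2}a(u,u)$. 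To convert this into a lower bound on $a(Su,u)$ I reuse the intermediate inequality $\alpha\sum_k\|S_k u\|_a^2\le a_h(u,Su)$ from the upper-bound step and split $a_h(u,Su)=a(u,Su)+E_h(u,Su)$, using the symmetry of $a$ so that $a(u,Su)=a(Su,u)$.

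The main obstacle is the final estimate of the perturbation term, which is exactly where the nonsymmetry genuinely enters. Here I would bound $|E_h(u,Su)|\le C_E h\|u\|_a\|Su\|_a$ via (\ref{eq:nonsym-part-h-bound}) and then invoke the already-established upper bound $\|Su\|_a\le\gamma_2\|u\|_a$ to get $|E_h(u,Su)|\le\gamma_2 C_E h\,a(u,u)$. Rearranging gives $a(Su,u)\ge \alpha\sum_k\|S_k u\|_a^2-\gamma_2 C_E h\,a(u,u)\ge\left(\frac{\alpha^3}{M^2C_0^2}-\gamma_2 C_E h\right)a(u,u)$, which is precisely $\gamma_1$. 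The choice of $h_1$ must absorb both bounding lemmas simultaneously; shrinking $h_1$ further, in the spirit of the Remark following Theorem~\ref{thm:ASM-covol-symm}, keeps $\gamma_1$ positive so that Theorem~\ref{thm:GMRES-estimate} applies to the iteration for $S$.
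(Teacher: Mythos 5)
Your proposal is correct and follows essentially the same route as the paper's own proof: Lemma~\ref{lem:sqtriangle} plus ellipticity and the identity $a_h(S_ku,S_ku)=a_h(u,S_ku)$ for the upper bound, and the splitting (\ref{eq:ASM-splitting})--(\ref{eq:ASM-split-est}) combined with the intermediate estimate $\alpha\sum_k\|S_ku\|_a^2\le a_h(u,Su)$ and the perturbation bound (\ref{eq:nonsym-part-h-bound}) for the lower bound. The constants $\gamma_1$ and $\gamma_2$ come out exactly as stated.
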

\begin{proof}
We follow the lines of proof of Theorem~\ref{thm:ASM-covol-symm}.
For the upper bound, we use Lemma~\ref{lem:sqtriangle} to see that
\begin{eqnarray*}
 a(Su,Su)&\leq&
2(1+\rho(\mathcal{E}))\sum_k\|S_ku\|_a^2.
\end{eqnarray*}
Using (\ref{eq:CVform-Vh-elliptic}), (\ref{eq:def_S_k}), and (\ref{eq:CVform-bounded}), we get
\begin{eqnarray}
\alpha \sum_{k=0}^N a(S_k u,S_k u) \leq
\sum_{k=0}^N a_h(S_k u,S_k u)
 &=& \sum_{k=0}^N  a_h(u,S_ku) \nonumber \\
 &=& a_h(u,Su) \label{eq:sumS_k-bound} \\
 &\leq& M \|u\|_a\|S u\|_a. \nonumber
\end{eqnarray}
And, the upper bound is proved with $\gamma_2=(\frac{2M}{\alpha}(1+\rho(\mathcal{E})))$.

For the lower bound, again, we use the splitting (\ref{eq:ASM-splitting}) of $u\in V^h$ such that (\ref{eq:ASM-split-est}) holds.
Next (\ref{eq:CVform-Vh-elliptic}), (\ref{eq:ASM-splitting}), (\ref{eq:def_S_k}), (\ref{eq:CVform-bounded}), a Schwarz inequality in $l_2$, and (\ref{eq:ASM-split-est}) yield that
\begin{eqnarray*}
 \alpha a(u,u) \leq a_h(u,u) = \sum_k a_h(u,u_k) &=& \sum_k a_h(S_k u,u_k) \nonumber\\
 &\leq& M\sum_k \|S_k u\|_a \|u_k\|_a \nonumber\\
 &\leq& M\sqrt{\sum_k\|S_k u\|_a^2} \sqrt{\sum_k\|u_k\|_a^2} \nonumber\\
 &\leq& M \: C_0\|u\|_a \sqrt{\sum_k\|S_k u\|_a^2} .
\end{eqnarray*}
Combining the estimate above with (\ref{eq:sumS_k-bound}), we get
\begin{eqnarray*}
 \alpha^2 a(u,u)
&\leq&M^2C_0^2 \sum_k\|S_k u\|_a^2\leq  \frac{M^2\:C_0^2}{\alpha}a_h(u,Su).
\end{eqnarray*}
Finally, using similar arguments as in the proof of Theorem~\ref{thm:ASM-covol-symm}, we can conclude that
$$
  a(u,S u)\geq (\frac{\alpha^3}{M^2 C_0^2} - \gamma_2C_E h)a(u,u),
$$
for any $h\leq h_1$.
\end{proof}

%
%
%
\section{Technical tools}\label{sec:techn-tools}
In this section, we present the technical results necessary for the proof of Theorem~\ref{thm:ASM-edge-sym}. We use the abstract framework introduced in the previous section, for which we verify the assumption (\ref{eq:nonsym-part-h-bound}), show that $\rho({\cal E})$ is bounded by a constant, and finally give an estimate for the $C_0^2$ such that (\ref{eq:ASM-splitting})-(\ref{eq:ASM-split-est}) to hold, all formulated as propositions.

We start with the proposition which shows that (\ref{eq:nonsym-part-h-bound}) holds true for the two bilinear forms $a(\cdot,\cdot)$ and $a_h(\cdot,\cdot)$ of (\ref{eq:sym-bil-form}) and (\ref{eq:FV-bil-form}), respectively.
\begin{proposition}\label{prop:nonsym-part-h-bound}
It holds that
$$
\exists \; C_E >0: \; \forall u, v \in V^h, \quad |a_h(u,v)-a(u,v)|\leq C_Eh\|u\|_a\|v\|_a,
$$
where $C_E$ is a constant independent of $h$ and the jumps of the coefficients across $\partial D_j$s, but may depend on $C_\Omega$ in  (\ref{eq:subd_D_K}) .
\end{proposition}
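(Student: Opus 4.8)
The plan is to localise the estimate to a single triangle and then to exploit the fact, noted in the remark at the end of Section~\ref{sec:FVD}, that the finite volume and Galerkin forms \emph{coincide on an element whenever the coefficient is constant there}. Writing $a(u,v)=\sum_{\tau\in T_h}a^\tau(u,v)$ with $a^\tau(u,v)=\int_\tau\nabla u^T A\nabla v\,dx$, and splitting the flux integrals in $a_h(u,v)=-\sum_{x_i\in\Omega_h}v(x_i)\int_{\partial V_i}A\nabla u\cdot\mathbf{n}\,ds$ into their parts interior to each triangle, $a_h(u,v)=\sum_\tau a_h^\tau(u,v)$ with $a_h^\tau(u,v)=-\sum_{x_i\in\tau}v(x_i)\int_{\partial V_i\cap\tau}A\nabla u\cdot\mathbf{n}\,ds$, it suffices to bound $a_h^\tau-a^\tau$ on each $\tau$ and to sum. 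On a fixed $\tau$ let $\overline{A}_\tau$ be the value of $A$ at the centroid (equivalently, its mean over $\tau$), and let $\overline{a}^\tau,\overline{a}_h^\tau$ be the forms with $A$ replaced by $\overline{A}_\tau$. Since $\overline{A}_\tau$ is constant on $\tau$ we have $\overline{a}_h^\tau=\overline{a}^\tau$, so I would split
$$
 a_h^\tau(u,v)-a^\tau(u,v)=\big(a_h^\tau(u,v)-\overline{a}_h^\tau(u,v)\big)+\big(\overline{a}^\tau(u,v)-a^\tau(u,v)\big)
$$
and estimate the two terms separately.

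The second (Galerkin) term is immediate: because $\nabla u,\nabla v$ are constant on $\tau$,
$$
 |\overline{a}^\tau(u,v)-a^\tau(u,v)|=\Big|\int_\tau\nabla u^T(\overline{A}_\tau-A)\nabla v\,dx\Big|\le\|A-\overline{A}_\tau\|_{L^\infty(\tau)}\,|u|_{H^1(\tau)}|v|_{H^1(\tau)},
$$
and the mean value inequality together with (\ref{eq:subd_D_K}) gives $\|A-\overline{A}_\tau\|_{L^\infty(\tau)}\le\|\nabla A\|_{L^\infty(\tau)}h_\tau\le C_\Omega h$, since each $\tau$ lies in a single $D_j$ on which $A\in(W^{1,\infty})^4$. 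This yields the bound $C_\Omega h\,|u|_{H^1(\tau)}|v|_{H^1(\tau)}$ for this term.

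The first (flux) term is where the real work lies, and I expect it to be the main obstacle. Here I would first use the \emph{consistency identity} that the interior segments of the control-volume boundaries inside $\tau$ are each traversed twice with opposite normals, whence $\sum_{x_i\in\tau}\int_{\partial V_i\cap\tau}(A-\overline{A}_\tau)\nabla u\cdot\mathbf{n}\,ds=0$. This lets me subtract the mean nodal value $\overline{v}=\tfrac13\sum_{x_i\in\tau}v(x_i)$ and rewrite the term as $-\sum_{x_i\in\tau}(v(x_i)-\overline{v})\int_{\partial V_i\cap\tau}(A-\overline{A}_\tau)\nabla u\cdot\mathbf{n}\,ds$. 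The factors are then controlled individually: $|v(x_i)-\overline{v}|\lesssim h_\tau|\nabla v|$ since $v$ is affine on $\tau$; $\|A-\overline{A}_\tau\|_{L^\infty(\tau)}\lesssim C_\Omega h_\tau$ as above; and the interior segments have length $O(h_\tau)$ while $|\nabla u|$ is constant. Careful bookkeeping then produces a bound $\lesssim C_\Omega h_\tau^3|\nabla u||\nabla v|$, and shape regularity in the form $|\tau|\approx h_\tau^2$ converts this, via $|\nabla u||\nabla v|=|u|_{H^1(\tau)}|v|_{H^1(\tau)}/|\tau|$, into $C_\Omega h_\tau\,|u|_{H^1(\tau)}|v|_{H^1(\tau)}\le C_\Omega h\,|u|_{H^1(\tau)}|v|_{H^1(\tau)}$.

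Summing both estimates over all $\tau$ and applying the discrete Cauchy--Schwarz inequality gives $|a_h(u,v)-a(u,v)|\le C\,C_\Omega h\,|u|_{H^1(\Omega)}|v|_{H^1(\Omega)}$. The final reduction from the $H^1$-seminorm to $\|\cdot\|_a$ is precisely what makes the constant \emph{independent of the jumps}: by the normalisation $\alpha=1$ and the lower bound in (\ref{eq:coeff-smooth-subd}) we have $|u|_{H^1(\Omega)}\le\|u\|_a$, so the claim follows with $C_E=C\,C_\Omega$. Crucially, the coefficient enters only through the local variation $\|\nabla A\|_{L^\infty(\tau)}\le C_\Omega$ inside each element, never through the subdomain magnitudes $\lambda_j,\Lambda_j$, which is exactly why $C_E$ depends on $C_\Omega$ but not on the jumps across the $\partial D_j$.
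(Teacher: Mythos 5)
Your argument is correct and is essentially the proof the paper relies on: the paper gives no details for this proposition, deferring to Lemma~3.1 of Ewing, Lazarov, Lin and Lin (2000), whose proof proceeds exactly by your elementwise freezing of the coefficient, the identity $\overline{a}_h^\tau=\overline{a}^\tau$ for a constant coefficient on the Donald dual mesh, the cancellation of interior fluxes, and the $W^{1,\infty}$ bound (\ref{eq:subd_D_K}) on each $D_j$ combined with $\lambda_j\geq 1$ to pass from $|u|_{H^1}$ to $\|u\|_a$. The one step worth writing out explicitly is the constant-coefficient identity itself (a short divergence-theorem computation on each region $\omega_{\tau,x_i}$, using $\int_{\partial\omega_{\tau,x_i}}\mathbf{n}\,ds=0$ and $|\tau|\nabla\phi_{x_i}=-\tfrac12|e_{jk}|\mathbf{n}_{jk}$), since the remark at the end of Section~\ref{sec:FVD} that you invoke asserts it only for coefficients constant on the subdomains $D_j$, not trianglewise.
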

The proof follow the same lines of proof of Lemma~3.1 in \cite{Ewing:2000:MFV}, cf. also \cite{Ewing:2002:OTA}.

Next, we present three known lemmas. The first lemma is the so-called Sobolev like inequality, cf. e.g. Lemma~7 in \cite{Smith:1996:DDP}.
\begin{lemma}[Discrete Sobolev like inequality] \label{lem:Sobolev-ineq-disc}
Let $u\in V^h(\Omega_k)$, then
$$
 \|u\|_{L^{\infty}(\Omega_k)}^2 \lesssim
 \left( 1+  \log\left(\frac{H_k}{h}\right)\right)
 \left(H_k^{-2}\|u\|_{L^2(\Omega_k)}^2+|u|_{H^1(\Omega_k)}^2\right)
$$
where $H_k=\mathrm{diam}(\Omega_k)$.
\end{lemma}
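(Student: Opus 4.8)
The plan is to reduce to a reference configuration by scaling and then obtain the estimate by pairing a two-dimensional Sobolev embedding, whose constant is tracked explicitly as the smoothness index approaches the critical value, with an inverse inequality on the finite element space, optimizing over the fractional exponent. Since $\|u\|_{L^\infty(\Omega_k)}$ is scale invariant while both $|u|_{H^1(\Omega_k)}^2$ and $H_k^{-2}\|u\|_{L^2(\Omega_k)}^2$ are invariant under the dilation $x\mapsto x/H_k$ in two dimensions (in the seminorm the factor $H_k^{-2}$ from $\nabla$ cancels the factor $H_k^{2}$ from $dx$, and similarly for the weighted $L^2$ term), it suffices to prove the inequality on a domain $\hat\Omega$ of unit diameter carrying a shape regular mesh of width $\eta:=h/H_k$, namely $\|u\|_{L^\infty(\hat\Omega)}^2\lesssim (1+\log(1/\eta))\,\|u\|_{H^1(\hat\Omega)}^2$ with $\|\cdot\|_{H^1}^2=|\cdot|_{H^1}^2+\|\cdot\|_{L^2}^2$. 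If $H_k/h$ is bounded (say $\log(1/\eta)\le 2$) the claim is immediate from a standard inverse inequality, so I assume $\log(1/\eta)\ge 2$.

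The core ingredient is a Sobolev embedding in the plane with a constant that degenerates controllably as the smoothness index tends to the critical value $1$. Extending $u$ to $\mathbb{R}^2$ by a Sobolev extension operator bounded on $H^{1+s}$ uniformly for $s\in(0,\tfrac12]$ (available since $\hat\Omega$ is Lipschitz with shape regular character, the uniform bound following by interpolation between the fixed integer-order endpoints $H^1$ and $H^2$), a Fourier/Cauchy--Schwarz argument gives, for any $s\in(0,\tfrac12]$,
\[
 \|v\|_{L^\infty(\mathbb{R}^2)}\lesssim s^{-1/2}\,\|v\|_{H^{1+s}(\mathbb{R}^2)},
\]
the $s^{-1/2}$ blow-up originating from the identity $\int_{\mathbb{R}^2}(1+|\xi|^2)^{-(1+s)}\,d\xi=\pi/s$. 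Hence $\|u\|_{L^\infty(\hat\Omega)}^2\lesssim s^{-1}\|u\|_{H^{1+s}(\hat\Omega)}^2$. Note that piecewise linear functions belong to $H^{1+s}$ precisely for $s<\tfrac12$, which is why the exponent is kept in $(0,\tfrac12)$; the embedding into $L^\infty$ itself is valid for every $s>0$.

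I then invoke a fractional inverse inequality on the finite element space, $\|u\|_{H^{1+s}(\hat\Omega)}\lesssim \eta^{-s}\|u\|_{H^1(\hat\Omega)}$, valid on quasi-uniform meshes and provable by space interpolation between the trivial case $s=0$ and a local Sobolev--Slobodeckij seminorm estimate. Combining the two bounds yields $\|u\|_{L^\infty(\hat\Omega)}^2\lesssim s^{-1}\eta^{-2s}\|u\|_{H^1(\hat\Omega)}^2$ for every admissible $s$. Choosing $s=1/\log(1/\eta)\in(0,\tfrac12]$ makes $\eta^{-2s}=e^{2}$ a fixed constant while $s^{-1}=\log(1/\eta)=\log(H_k/h)$, so $\|u\|_{L^\infty(\hat\Omega)}^2\lesssim (1+\log(H_k/h))\,\|u\|_{H^1(\hat\Omega)}^2$; undoing the scaling recovers the stated inequality, with a constant depending only on the shape regularity and the dimension, and in particular independent of $h$, $H_k$, and of the coefficient jumps.

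The main obstacle is the uniform control of constants with respect to the fractional exponent: the whole logarithmic gain comes from letting $s\to0$ at the rate $1/\log(H_k/h)$, so one must guarantee that both the extension operator and the fractional inverse estimate carry constants bounded uniformly for $s$ in the relevant range rather than blowing up as $s\to0$ or $s\to\tfrac12$. A more elementary route that avoids fractional spaces altogether is the dyadic/annular argument: take the node $x_0$ at which the piecewise linear $u$ attains its maximum, telescope $u(x_0)$ across $O(\log(H_k/h))$ concentric annuli of geometrically decreasing radii down to the mesh scale $h$, bound each difference of circular averages by the $H^1$ seminorm over the corresponding annulus, and apply Cauchy--Schwarz in the annulus index to extract the $\sqrt{\log(H_k/h)}$ factor; there the delicate point is the innermost annulus at scale $h$, where the continuous Poincaré control fails and must be replaced by the discrete structure together with an inverse estimate.
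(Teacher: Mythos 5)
The paper does not actually prove this lemma; it is quoted as a known result with a pointer to Lemma~7 of Smith--Bj{\o}rstad--Gropp, so there is no in-paper argument to compare against. Your proposal is the classical proof of that cited result, and the overall strategy is sound: the scaling reduction is correct (all three quantities are dilation invariant in two dimensions), the Fourier computation giving $\|v\|_{L^\infty(\mathbb{R}^2)}\lesssim s^{-1/2}\|v\|_{H^{1+s}(\mathbb{R}^2)}$ is right, and the choice $s\sim 1/\log(H_k/h)$ delivers exactly one factor of the logarithm. The one place where the argument is not yet a proof is the point you yourself flag: the $s$-uniformity of the constants. This is not merely cosmetic --- if you measure $H^{1+s}$ by the Gagliardo--Slobodeckij seminorm, the elementwise computation of $|\nabla u|_{H^s}$ for piecewise constant gradients produces an extra factor $s^{-1/2}$ (the equivalence between the Gagliardo and Bessel-potential seminorms degenerates like $\sqrt{s(1-s)}$), and carrying that factor through the optimization yields $(1+\log(H_k/h))^2$ instead of $(1+\log(H_k/h))$, which would weaken Proposition~\ref{prop:low-bnd-sym} to a cubed logarithm. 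So you must either work consistently with the Fourier/interpolation norm and verify that both the extension bound and the inverse estimate hold uniformly in that norm, or switch to the equivalent $L^p$ route (Sobolev embedding $H^1(\mathbb{R}^2)\hookrightarrow L^p$ with constant $C\sqrt{p}$, the elementwise inverse estimate $\|u\|_{L^\infty(\tau)}\lesssim |\tau|^{-1/p}\|u\|_{L^p(\tau)}$ with a $p$-independent constant, and $p\sim\log(H_k/h)$), which avoids fractional spaces and their norm-equivalence subtleties altogether and is the cleaner standard argument. Two small additional points: your admissible range should exclude $s=\tfrac12$ outright (piecewise linears are not in $H^{3/2}$), which is harmless since the bounded-$H_k/h$ regime is already handled by the plain inverse inequality; and the dyadic-annulus alternative you sketch is a legitimate fallback but, as you note, also requires care at the mesh scale.
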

The second lemma is the well known extension theorem for discrete harmonic functions, cf. e.g. Lemma~5.1 in \cite{Bjorstad:1986:IMS}.
\begin{lemma}[Discrete extension theorem] \label{lem:extension-thm}
Let $u\in W_k$, then
$$
|u|_{H^1(\Omega_k)} \lesssim |u|_{H^{1/2}(\partial\Omega_k)}.
$$
\end{lemma}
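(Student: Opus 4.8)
The plan is to exploit the energy-minimization characterization of discrete harmonic functions and to reduce the coefficient-weighted statement to the classical, $A$-independent discrete extension estimate, which is then obtained by scaling to a reference configuration and invoking a stable discrete extension of the boundary data.

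First I would record that, since $u\in W_k$ is $a_k$-discrete harmonic, it minimizes $a_k(\cdot,\cdot)$ over all $w\in V_h(\Omega_k)$ sharing its boundary trace $g:=u_{|\partial\Omega_k}$: for any such competitor $w$ we have $u-w\in V_{h,0}(\Omega_k)$, hence $a_k(u,u-w)=0$ and $a_k(w,w)=a_k(u,u)+a_k(w-u,w-u)\geq a_k(u,u)$. Combining this with the spectral equivalence (\ref{eq:coeff-smooth-subd}) and the assumption $\Lambda_k\leq C_1\lambda_k$, one obtains
$$
 \lambda_k|u|_{H^1(\Omega_k)}^2\leq a_k(u,u)\leq a_k(w,w)\leq \Lambda_k|w|_{H^1(\Omega_k)}^2,
$$
so that $|u|_{H^1(\Omega_k)}^2\leq C_1|w|_{H^1(\Omega_k)}^2$. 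This absorbs the coefficient $A$ into the uniform constant $C_1$ and reduces the claim to the purely geometric statement that $g$ admits a discrete extension $w$ with $|w|_{H^1(\Omega_k)}\lesssim |g|_{H^{1/2}(\partial\Omega_k)}$.

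Next I would remove the $H_k$-dependence by scaling. In two dimensions both seminorms $|\cdot|_{H^1(\Omega_k)}$ and $|\cdot|_{H^{1/2}(\partial\Omega_k)}$ are invariant under dilation, so after mapping $\Omega_k$ onto a reference subdomain $\widehat\Omega$ of unit diameter (using the shape regularity of the coarse partition to keep the reference configurations uniformly controlled) the estimate becomes scale free, with the fine mesh rescaled to size $\widehat h\sim h/H_k$. On $\widehat\Omega$ I would invoke the continuous extension theorem, which furnishes a bounded right inverse $E$ of the trace operator with $|Eg|_{H^1(\widehat\Omega)}\lesssim |g|_{H^{1/2}(\partial\widehat\Omega)}$, and then pass into the finite element space through an $H^1$-seminorm stable quasi-interpolation $\Pi_{\widehat h}$ of Scott--Zhang type chosen so as to reproduce the boundary data: since $g$ is already piecewise linear on $\partial\widehat\Omega$, the operator can be arranged so that $(\Pi_{\widehat h}Eg)_{|\partial\widehat\Omega}=g$. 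Setting $w=\Pi_{\widehat h}Eg$ then yields a discrete extension of $g$ with $|w|_{H^1(\widehat\Omega)}\lesssim |Eg|_{H^1(\widehat\Omega)}\lesssim |g|_{H^{1/2}(\partial\widehat\Omega)}$; scaling back and combining with the first step closes the argument. As this is exactly the content of Lemma~5.1 in \cite{Bjorstad:1986:IMS}, in practice I would simply cite it once the reduction is in place.

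The main obstacle is this last ingredient, namely the uniform discrete trace and extension inequality on the reference domain: one must produce an $H^1$-stable finite element extension that matches the prescribed nodal boundary values, with a stability constant independent of the mesh parameter $\widehat h$ as $\widehat h\to 0$ and of the subdomain shape. The two delicate points are the boundary-reproducing property of the quasi-interpolation and the $\widehat h$-uniformity of its seminorm stability, both of which rest essentially on the shape regularity of $T_h$ together with that of the coarse partition $\{\Omega_k\}$. Everything else in the proof is a routine consequence of energy minimization, spectral equivalence, and scaling.
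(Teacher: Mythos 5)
Your argument is correct, and it in fact does more than the paper, which offers no proof of this lemma at all: it simply points to Lemma~5.1 of Bj{\o}rstad and Widlund. Your two-step structure --- (i) energy minimization of the $a_k$-discrete harmonic function among all discrete competitors with the same trace, combined with the spectral equivalence $\lambda_k|\cdot|_{H^1}^2\leq a_k(\cdot,\cdot)\leq\Lambda_k|\cdot|_{H^1}^2$ and $\Lambda_k\leq C_1\lambda_k$ to remove the coefficient, and (ii) construction of one good competitor by scaling to a reference subdomain, extending continuously, and applying a boundary-value-preserving Scott--Zhang interpolant --- is precisely the standard proof of the cited result, and step (i) is a genuinely useful addition since the paper leaves implicit how the constant stays independent of $A$ when $W_k$ is defined through the coefficient-dependent form $a_k$. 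The only point you should make explicit is in step (ii): the continuous trace/extension theorem is usually stated with full norms, $\|Eg\|_{H^1(\widehat\Omega)}\lesssim\|g\|_{H^{1/2}(\partial\widehat\Omega)}$, so to obtain the seminorm bound $|w|_{H^1(\widehat\Omega)}\lesssim|g|_{H^{1/2}(\partial\widehat\Omega)}$ you need to extend $g-\bar g$ (with $\bar g$ the boundary mean), add the constant back, and invoke a Poincar\'e-type inequality $\|g-\bar g\|_{L^2(\partial\widehat\Omega)}\lesssim|g|_{H^{1/2}(\partial\widehat\Omega)}$; this is routine but is exactly the kind of quotient-space argument the rest of the paper also uses, so it deserves a sentence.
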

Finally, the third lemma gives an estimate of the $H^{1/2}_{00}(\Gamma_{kl})$ norm of a finite element function which is zero on $\partial\Omega_k\setminus\Gamma_{kl}$ by its $H^{1/2}$ seminorm and $L^\infty$ norm, cf. e.g. Lemma~4.1 in \cite{Tallec:1998:NND}.
\begin{lemma} \label{lem:edge-Hh00-est}
 Let $u\in W_k$ such that $u_{|\partial\Omega_k \setminus \Gamma_{k l}}=0$, then
$$
  \|u\|_{H^{1/2}_{00}(\Gamma_{k l})}^2\lesssim
    |u|_{H^{1/2}(\Gamma_{k l})}^2 +
   \left( 1+  \log\left(\frac{H_k}{h}\right)\right)\|u\|_{L^\infty(\Gamma_{k l})}^2
$$
\end{lemma}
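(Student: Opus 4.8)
The plan is to work directly from the definition of the $H^{1/2}_{00}$-norm. Recall that for a function $w$ on an edge $\Gamma_{kl}$,
$$
\|w\|_{H^{1/2}_{00}(\Gamma_{kl})}^2 = |w|_{H^{1/2}(\Gamma_{kl})}^2 + \int_{\Gamma_{kl}} \frac{|w(x)|^2}{\mathrm{dist}(x,\partial\Gamma_{kl})}\,dx,
$$
where $\partial\Gamma_{kl}$ denotes the two endpoints (crosspoints) of the edge. Since the seminorm $|u|_{H^{1/2}(\Gamma_{kl})}^2$ already appears on the right-hand side of the claimed bound, the whole task reduces to estimating the weighted boundary integral by $(1+\log(H_k/h))\|u\|_{L^\infty(\Gamma_{kl})}^2$. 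Throughout I would use that the trace $u_{|\Gamma_{kl}}$ is a continuous piecewise linear function on the one-dimensional mesh that $T_h(\Omega_k)$ induces on $\Gamma_{kl}$, so the argument is entirely one-dimensional; the discrete-harmonicity $u\in W_k$ plays no role beyond guaranteeing that $u$ is a finite element function.

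Next I would parametrize $\Gamma_{kl}$ as an interval $[0,L]$ with $L=|\Gamma_{kl}|\lesssim H_k$, so that $\mathrm{dist}(x,\partial\Gamma_{kl})=\min(x,L-x)$, and split the integral at the midpoint into the two symmetric contributions $\int_0^{L/2}|u|^2/x\,dx$ and $\int_{L/2}^{L}|u|^2/(L-x)\,dx$. Consider the first. The crucial observation is that the hypothesis $u_{|\partial\Omega_k\setminus\Gamma_{kl}}=0$ together with continuity forces $u$ to vanish at the endpoint $x=0$. I would further split $[0,L/2]$ into the first mesh element $[0,h]$ and the remainder $[h,L/2]$. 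On $[0,h]$ linearity and $u(0)=0$ give $|u(x)|\le (x/h)\|u\|_{L^\infty(\Gamma_{kl})}$, so that $\int_0^h |u|^2/x\,dx \le h^{-2}\|u\|_{L^\infty(\Gamma_{kl})}^2\int_0^h x\,dx=\frac{1}{2}\|u\|_{L^\infty(\Gamma_{kl})}^2$. On the remainder I would simply bound $|u(x)|^2\le \|u\|_{L^\infty(\Gamma_{kl})}^2$ and integrate the weight, $\int_h^{L/2} x^{-1}\,dx=\log(L/2h)\lesssim 1+\log(H_k/h)$. Summing the two pieces, together with the symmetric contribution near $x=L$, yields the desired bound on the weighted integral.

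The point requiring care, and the only genuine obstacle, is the integrability of the weight $1/\mathrm{dist}(x,\partial\Gamma_{kl})$ near the endpoints: without a boundary condition this integral diverges. The hypothesis $u_{|\partial\Omega_k\setminus\Gamma_{kl}}=0$ is precisely what makes $u(0)=u(L)=0$, and combined with the piecewise-linear structure on the first and last elements it replaces the would-be logarithmic divergence on $[0,h]$ by a bounded contribution, while the factor $(1+\log(H_k/h))$ emerges only from the harmless portion $[h,L/2]$ bounded away from the endpoints. I would finally note that this is a standard estimate (cf. Lemma~4.1 in \cite{Tallec:1998:NND}), so the constants hidden in $\lesssim$ depend only on the shape regularity of the mesh and not on $H_k$, $h$, or the coefficient jumps.
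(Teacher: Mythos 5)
Your proposal is correct. Note, however, that the paper does not actually prove this lemma: it is stated as a known result with a pointer to Lemma~4.1 of Le~Tallec--Mandel--Vidrascu \cite{Tallec:1998:NND}, so there is no in-paper argument to compare against. What you have written is essentially the standard proof of that cited result: take the $H^{1/2}_{00}$-norm in its weighted form, observe that only the term $\int_{\Gamma_{kl}}|u|^2/\mathrm{dist}(x,\partial\Gamma_{kl})\,dx$ needs work, use $u=0$ at the crosspoints together with linearity on the first and last mesh elements to tame the non-integrable weight there (contributing only $O(\|u\|_{L^\infty}^2)$), and extract the factor $1+\log(H_k/h)$ from $\int_{h}^{L/2}x^{-1}\,dx$ on the remaining portion. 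Your observation that discrete harmonicity is irrelevant and only the piecewise-linear trace matters is accurate. The one point worth flagging is mesh-size bookkeeping: the length of the first element on $\Gamma_{kl}$ is the local mesh size, not the global $h=\max_\tau\mathrm{diam}(\tau)$, so the bound $\int_{h_0}^{L/2}x^{-1}\,dx=\log(L/(2h_0))\lesssim 1+\log(H_k/h)$ implicitly uses local quasi-uniformity of the trace mesh on the edge; this is a standard convention in the domain decomposition literature and is implicit in the paper's own citation, but it deserves a sentence. With that caveat your argument is a complete, self-contained proof of the lemma the paper leaves to the literature.
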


In the following we present additional set of technical lemmas. The first one is a simple result which will be useful to estimate the $H^1$ seminorm of functions from the coarse space $V_0$.
\begin{lemma} \label{lem:inex-bil-form-est}
For $u\in V_0$ and $C$ being an arbitrary constant, the following holds, i.e.
$$
   |u|_{H^1(\Omega_k)}^2\lesssim \sum_{x\in \mathcal{V}_k} |u(x)-C|^2,
$$
where $\mathcal{V}_k$ is the set of all vertices of $\Omega_k$ which are not on $\partial\Omega$.
\end{lemma}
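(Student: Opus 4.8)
The plan is to exploit two facts: that the seminorm $|\cdot|_{H^1(\Omega_k)}$ annihilates constants, and that a function in $V_0$ is, on each subdomain, the discrete harmonic extension of a trace that is linear on every coarse edge. First I would observe that $u_{|\Omega_k}$ belongs to $W_k$ (since $V_0\subset W$), so the discrete extension theorem, Lemma~\ref{lem:extension-thm}, applies and gives
$$ |u|_{H^1(\Omega_k)}^2 \lesssim |u|_{H^{1/2}(\partial\Omega_k)}^2. $$
Because the $H^{1/2}(\partial\Omega_k)$ seminorm depends only on differences of boundary values, it is insensitive to the additive constant, i.e. $|u|_{H^{1/2}(\partial\Omega_k)}=|u-C|_{H^{1/2}(\partial\Omega_k)}$. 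The task therefore reduces to bounding the boundary seminorm of the coarse trace $u-C$ by the nodal quantity $\sum_{x\in\mathcal V_k}|u(x)-C|^2$.

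Second, I would carry out this boundary estimate by a scaling and finite-dimensional argument. The restriction of $u-C$ to $\partial\Omega_k$ is continuous and linear on each subdomain edge, hence completely determined by the values $u(x)-C$ at the vertices $x$ of $\Omega_k$, of which there are only $O(1)$ by the shape regularity of the coarse partition (cf. \cite{Brenner:1999:CNS}). The $H^{1/2}$ seminorm is invariant under the dilation mapping $\Omega_k$ onto a reference polygon of unit diameter, and on the resulting uniformly bounded family of reference polygons the space of edgewise-linear boundary functions is finite dimensional; there every seminorm vanishing on constants is controlled by $\min_c\sum_x|w(x)-c|^2$, and a fortiori by $\sum_x|w(x)-C|^2$. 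Transporting this back using the scale invariance yields $|u-C|_{H^{1/2}(\partial\Omega_k)}^2 \lesssim \sum_x|u(x)-C|^2$ with a constant depending only on the shape-regularity parameter, which together with the first step proves the claim.

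An equally workable alternative, avoiding the fractional norm altogether, is to use that $u_{|\Omega_k}$ minimizes $a_k(\cdot,\cdot)$ among all functions in $V_h(\Omega_k)$ sharing its trace: one compares with the piecewise linear function on a fixed coarse triangulation of $\Omega_k$ that equals $u(x)$ at the vertices and $C$ at the interior nodes. The local spectral equivalence (\ref{eq:coeff-smooth-subd}) with $\Lambda_k\le C_1\lambda_k$ makes the jump constants cancel, so $|u|_{H^1(\Omega_k)}^2$ is bounded by the Dirichlet energy of this comparison function, which is scale invariant in two dimensions and is itself $\lesssim\sum_x|u(x)-C|^2$.

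In either route the main obstacle is to secure a single constant that is uniform over all subdomains and independent of the coefficient jumps; this is precisely where shape regularity and the scale invariance of the relevant seminorm are essential, reducing the estimate to a compact family of reference configurations on which norm equivalence holds uniformly. A secondary point to treat carefully is a subdomain $\Omega_k$ abutting $\partial\Omega$: there the vertices on $\partial\Omega$ are excluded from $\mathcal V_k$ yet $u$ vanishes on them, so the estimate is applied with $C=0$ and those nodes contribute nothing, the freedom in $C$ being genuinely needed only for the interior, floating subdomains.
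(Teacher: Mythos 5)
Your argument is correct, but your primary route is genuinely different from the paper's. The paper proves the lemma in two lines: it writes $u-C=\sum_{x\in\mathcal{V}_k}(u(x)-C)\phi_x$ in the discrete harmonic nodal basis and uses $|\phi_x|_{H^1(\Omega_k)}^2\lesssim 1$, which follows from the energy-minimizing property of discrete harmonic functions compared against the coarse hat function, whose Dirichlet energy is scale invariant in two dimensions. This is essentially your second, ``equally workable alternative,'' with the single comparison function replaced by a basis expansion. Your main route instead goes through the boundary: Lemma~\ref{lem:extension-thm} reduces the claim to the $H^{1/2}(\partial\Omega_k)$ seminorm of the edgewise-linear trace, which you then control by scale invariance of that seminorm and finite-dimensional norm equivalence (modulo constants) on a reference polygon. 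Both are sound; the paper's version is shorter and never leaves $H^1$, while yours isolates cleanly where shape regularity enters and avoids estimating the energy of each $\phi_x$. Two points in your favour that deserve emphasis. First, in either route the passage from $a_k$-discrete-harmonicity to the plain $H^1$ seminorm silently uses $\Lambda_k\leq C_1\lambda_k$ so that the coefficient cancels; you make this explicit only in the second route, but it is equally needed when invoking Lemma~\ref{lem:extension-thm} for functions in $W_k$. Second, your caveat about subdomains touching $\partial\Omega$ is a genuine correction rather than a pedantic aside: there $\sum_{x\in\mathcal{V}_k}\phi_x\not\equiv 1$ on $\Omega_k$, and the stated inequality with arbitrary $C$ is actually false (take $u\equiv 1$ on $\mathcal{V}_k$, vanishing at the boundary vertices, and $C=1$), so one must take $C=0$ there exactly as you say; the paper's proof, which tacitly assumes $u-C=\sum_{x\in\mathcal{V}_k}(u(x)-C)\phi_x$, glosses over this.
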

\begin{proof}
Note that $u_{|\Omega_k}=\sum_{x\in  \mathcal{V}_k}u(x){\phi_x}_{|\Omega_k}$, where $\phi_x$ is a discrete harmonic function which is equal to one at $x$, zero at $\mathcal{V}_k\setminus\{x\}$, and linear along the edges $\Gamma_{kl} \subset \partial\Omega_k$ .
Thus, for any constant $C$, we have
\begin{eqnarray*}
|u|_{H^1(\Omega_k)}^2=|u-C|_{H^1(\Omega_k)}^2\lesssim
\sum_{x\in  \mathcal{V}_k}|u(x)-C|^2|\phi_x|_{H^1(\Omega_k)}^2\lesssim
\sum_{x\in  \mathcal{V}_k}|u(x)-C|^2.
\end{eqnarray*}
The last inequality follows from the standard estimate of $H^1$ seminorm of a coarse nodal function, and the fact that a discrete harmonic function has the minimal energy of all functions taking the same values on the boundary.
\end{proof}

\begin{definition}\label{def:coarse-interp}
 Let $I_H:V_h\rightarrow V_0$ be a coarse interpolant defined by the values of $u$ at the vertices $\mathcal{V}$, i.e. let $I_H u \in V_0$
and $I_H u(x)=u(x)$ for $x\in   \mathcal{V}$.
\end{definition}


\begin{lemma}\label{lem:coarse-loc-est}
For any $u\in V_h$, the following holds, i.e.
\begin{eqnarray*}
  |I_H u|_{H^1(\Omega_k)}^2\lesssim \left( 1+  \log\left(\frac{H_k}{h}\right)\right)
  |u|_{H^1(\Omega_k)}^2.
\end{eqnarray*}
\end{lemma}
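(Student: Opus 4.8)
The plan is to reduce the $H^1$-seminorm of the coarse interpolant $I_Hu$ to a finite sum of nodal values at the crosspoints, then control those nodal values by an $L^\infty$ norm which the discrete Sobolev inequality bounds in terms of $|u|_{H^1(\Omega_k)}$ with the logarithmic factor. First, since $I_Hu\in V_0$ and $I_Hu(x)=u(x)$ for every vertex $x\in\mathcal{V}_k$, I would apply Lemma~\ref{lem:inex-bil-form-est} with an as-yet-unspecified constant $C$ to get
$$
|I_Hu|_{H^1(\Omega_k)}^2\lesssim\sum_{x\in\mathcal{V}_k}|I_Hu(x)-C|^2=\sum_{x\in\mathcal{V}_k}|u(x)-C|^2.
$$
Because the coarse mesh is shape regular, the cardinality of $\mathcal{V}_k$ is bounded independently of the mesh parameters, so the whole sum is bounded by a constant multiple of $\|u-C\|_{L^\infty(\Omega_k)}^2$.

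Next I would take $C=\bar u$, the mean value of $u$ over $\Omega_k$, and apply the discrete Sobolev inequality (Lemma~\ref{lem:Sobolev-ineq-disc}) to $u-\bar u$ (which still lies in $V^h(\Omega_k)$), giving
$$
\|u-\bar u\|_{L^\infty(\Omega_k)}^2\lesssim\left(1+\log\left(\frac{H_k}{h}\right)\right)\left(H_k^{-2}\|u-\bar u\|_{L^2(\Omega_k)}^2+|u|_{H^1(\Omega_k)}^2\right),
$$
where I used $|u-\bar u|_{H^1(\Omega_k)}=|u|_{H^1(\Omega_k)}$. The reason for subtracting the mean is that the Poincaré inequality yields $\|u-\bar u\|_{L^2(\Omega_k)}^2\lesssim H_k^2|u|_{H^1(\Omega_k)}^2$, so the first term in the parenthesis is absorbed into the second, leaving
$$
\|u-\bar u\|_{L^\infty(\Omega_k)}^2\lesssim\left(1+\log\left(\frac{H_k}{h}\right)\right)|u|_{H^1(\Omega_k)}^2.
$$
Combining this with the first step gives the claim.

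The main obstacle is getting the scaling correct in the Poincaré step, i.e.\ ensuring that the term $H_k^{-2}\|u-\bar u\|_{L^2(\Omega_k)}^2$ is controlled by $|u|_{H^1(\Omega_k)}^2$ \emph{uniformly} in $H_k$. This is precisely why the mean value is the right choice of $C$ and why the Poincaré constant must be scale invariant, which follows by mapping $\Omega_k$ to a reference domain of unit diameter using the shape regularity of the coarse partition. The remaining points (boundedness of $|\mathcal{V}_k|$ and the fact that $u-\bar u$ is still an admissible argument of Lemma~\ref{lem:Sobolev-ineq-disc}) are routine consequences of shape regularity and the affine invariance of the seminorm.
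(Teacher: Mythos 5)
Your proof is correct and follows essentially the same route as the paper: Lemma~\ref{lem:inex-bil-form-est} to reduce to nodal values, then Lemma~\ref{lem:Sobolev-ineq-disc} applied to $u-C$. Your explicit choice $C=\bar u$ together with the scaled Poincar\'e inequality is exactly what the paper compresses into the phrase ``a scaling argument and a quotient space argument.''
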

\begin{proof}
From lemmas~\ref{lem:inex-bil-form-est} and \ref{lem:Sobolev-ineq-disc}, we get
$$ |I_H u|_{H^1(\Omega_k)}^2\lesssim
\left( 1+  \log\left(\frac{H_k}{h}\right)\right)
\left(H_k^{-2}\|u-C\|_{L^2(\Omega_k)}^2+|u-C|_{H^1(\Omega_k)}^2\right),
$$
for any constant $C$. A scaling argument and a quotient space argument
complete the proof.
\end{proof}

\begin{lemma}\label{lem:proof-edge-lem}
Let $\Gamma_{k l}\subset\partial\Omega_k$ be an edge, and
$u_{k l}\in W_k$ be a function defined as $u_{k l}(x)=u(x)-I_Hu(x)$ on $\Gamma_{k l}$, and as zero on $\partial\Omega_k\setminus\Gamma_{k l}$ for any $u\in V_h$. Then, the following holds, i.e.
\begin{eqnarray}
  |u_{k l}|_{H^{1/2}_{00}(\Gamma_{k l})}^2\lesssim \left( 1+  \log\left(\frac{H_k}{h}\right)\right)^2
  |u|_{H^1(\Omega_k)}^2
\end{eqnarray}
\end{lemma}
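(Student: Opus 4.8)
The plan is to reduce the $H^{1/2}_{00}(\Gamma_{kl})$ estimate to an edge seminorm term and an $L^\infty$ term via Lemma~\ref{lem:edge-Hh00-est}, and then bound each separately. Since $u_{kl}\in W_k$ and $u_{kl}$ vanishes on $\partial\Omega_k\setminus\Gamma_{kl}$ by construction, Lemma~\ref{lem:edge-Hh00-est} applies directly and yields
$$
\|u_{kl}\|_{H^{1/2}_{00}(\Gamma_{kl})}^2 \lesssim |u_{kl}|_{H^{1/2}(\Gamma_{kl})}^2 + \left(1+\log\left(\frac{H_k}{h}\right)\right)\|u_{kl}\|_{L^\infty(\Gamma_{kl})}^2 .
$$
As the seminorm $|u_{kl}|_{H^{1/2}_{00}(\Gamma_{kl})}$ is dominated by the full $H^{1/2}_{00}$ norm, it suffices to bound the right-hand side. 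Recalling that $u_{kl}=u-I_Hu$ on $\Gamma_{kl}$, the two remaining quantities are the $H^{1/2}(\Gamma_{kl})$ seminorm and the $L^\infty(\Gamma_{kl})$ norm of $u-I_Hu$.

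For the seminorm term I would split by the triangle inequality, $|u-I_Hu|_{H^{1/2}(\Gamma_{kl})}^2 \lesssim |u|_{H^{1/2}(\Gamma_{kl})}^2 + |I_Hu|_{H^{1/2}(\Gamma_{kl})}^2$, using that restricting the $H^{1/2}$ double integral from $\partial\Omega_k$ to $\Gamma_{kl}$ only decreases it, followed by the scale-invariant trace inequality $|v|_{H^{1/2}(\partial\Omega_k)}\lesssim |v|_{H^1(\Omega_k)}$. This gives $|u|_{H^{1/2}(\Gamma_{kl})}^2\lesssim |u|_{H^1(\Omega_k)}^2$ and, combined with Lemma~\ref{lem:coarse-loc-est} for the coarse part, $|I_Hu|_{H^{1/2}(\Gamma_{kl})}^2\lesssim |I_Hu|_{H^1(\Omega_k)}^2\lesssim (1+\log(H_k/h))|u|_{H^1(\Omega_k)}^2$. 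Hence the seminorm term is already $\lesssim (1+\log(H_k/h))|u|_{H^1(\Omega_k)}^2$, well within the claimed bound.

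The $L^\infty$ term is the crux, and the key is to subtract a constant before invoking the discrete Sobolev inequality. Let $c$ be the mean value of $u$ over $\Omega_k$; since $I_H$ reproduces constants on $\Omega_k$ (its coarse basis functions form a partition of unity there), we have $u-I_Hu=(u-c)-I_H(u-c)$. On $\Gamma_{kl}$ the function $I_H(u-c)$ is linear and interpolates $u-c$ at the two crosspoints, so $\|I_H(u-c)\|_{L^\infty(\Gamma_{kl})}\leq \|u-c\|_{L^\infty(\Omega_k)}$, whence $\|u_{kl}\|_{L^\infty(\Gamma_{kl})}\lesssim \|u-c\|_{L^\infty(\Omega_k)}$. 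Applying Lemma~\ref{lem:Sobolev-ineq-disc} to $u-c$, then using the Poincar\'e inequality $\|u-c\|_{L^2(\Omega_k)}^2\lesssim H_k^2|u|_{H^1(\Omega_k)}^2$ to absorb the $H_k^{-2}\|u-c\|_{L^2}^2$ term, and noting $|u-c|_{H^1(\Omega_k)}=|u|_{H^1(\Omega_k)}$, yields $\|u_{kl}\|_{L^\infty(\Gamma_{kl})}^2\lesssim (1+\log(H_k/h))|u|_{H^1(\Omega_k)}^2$.

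Combining the two estimates, the seminorm contributes a single logarithmic factor while the $L^\infty$ term, multiplied by the extra $(1+\log(H_k/h))$ from Lemma~\ref{lem:edge-Hh00-est}, contributes the square, so the total is $\lesssim (1+\log(H_k/h))^2|u|_{H^1(\Omega_k)}^2$, as claimed. The main obstacle is precisely this $L^\infty$ estimate: a direct application of the Sobolev inequality to $u-I_Hu$ would replace the seminorm by $|u-I_Hu|_{H^1(\Omega_k)}^2\lesssim (1+\log(H_k/h))|u|_{H^1(\Omega_k)}^2$ (via Lemma~\ref{lem:coarse-loc-est}) and thus produce a spurious third logarithmic power; the constant-subtraction together with Poincar\'e is exactly what keeps the exponent at two.
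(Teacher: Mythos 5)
Your argument is correct and follows essentially the same route as the paper: split via Lemma~\ref{lem:edge-Hh00-est}, bound the $H^{1/2}(\Gamma_{kl})$ term by trace plus Lemma~\ref{lem:coarse-loc-est} (one logarithm), and bound the $L^\infty$ term by subtracting a constant, using $\|I_H(u-C)\|_{L^\infty(\Gamma_{kl})}\leq\|u-C\|_{L^\infty(\Gamma_{kl})}$ and the discrete Sobolev inequality (one more logarithm). Your explicit use of the mean value and the Poincar\'e inequality is exactly what the paper compresses into ``a scaling argument and a quotient space argument,'' and your closing remark about why the naive estimate would give a third logarithm correctly identifies the point of the constant subtraction.
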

\begin{proof}
By Lemma~\ref{lem:edge-Hh00-est}, we get
\begin{equation} \label{eq:proof-edge-lem:1}
 \|u_{k l}\|_{H^{1/2}_{00}(\Gamma_{k l})}^2\lesssim
    |u-I_H u|_{H^{1/2}(\Gamma_{k l})}^2 +
   \left( 1+  \log\left(\frac{H_k}{h}\right)\right)\|u-I_H u\|_{L^\infty(\Gamma_{k l})}^2.
\end{equation}
The first term can be estimated using the standard trace theorem, a triangle inequality and Lemma~\ref{lem:coarse-loc-est} as follows,
\begin{equation} \label{eq:proof-edge-lem:2}
   |u-I_H u|_{H^{1/2}(\Gamma_{k l})}^2\lesssim
   |u|_{H^1(\Omega_k)}^2 +  |I_Hu|_{H^1(\Omega_k)}^2 \lesssim
\left( 1+  \log\left(\frac{H_k}{h}\right)\right)
  |u|_{H^1(\Omega_k)}^2.
\end{equation}
For any constant $C$, we note that $u-I_Hu=u-C -I_H(u-C)$ on $\Gamma_{k l}$, and since $I_H u$ is a linear function along $\Gamma_{k l}$,
$\|I_H u\|_{L^\infty(\Gamma_{k l})}\leq \|u\|_{L^\infty(\Gamma_{k l})}$.
Hence the $L^\infty$ norm of $u-I_H u$ in (\ref{eq:proof-edge-lem:1}) can be estimated as follows,
\begin{eqnarray*}
 \|u-I_H u\|_{L^\infty(\Gamma_{k l})}^2&\lesssim&
 \|u-C\|_{L^\infty(\Gamma_{k l})}^2 + \|I_H (u-C)\|_{L^\infty(\Gamma_{k l})}^2 \\
&\leq& \|u-C\|_{L^\infty(\Gamma_{k l})}^2\\
&\lesssim&
\left( 1+  \log\left(\frac{H_k}{h}\right)\right)
\left(H_k^{-2}\|u-C\|_{L^2(\Omega_k)}^2+|u-C|_{H^1(\Omega_k)}^2\right),
\end{eqnarray*}
where $C$ as any arbitrary constant. The last inequality is due to Lemma~\ref{lem:Sobolev-ineq-disc}.
Finally, a scaling argument and a quotient space argument yield
\begin{eqnarray*}
  \|u-I_H u\|_{L^\infty(\Gamma_{k l})}^2\lesssim
 \left( 1+  \log\left(\frac{H_k}{h}\right)\right)
|u|_{H^1(\Omega_k)}^2.
\end{eqnarray*}
The above estimate together with the estimates (\ref{eq:proof-edge-lem:2}) and (\ref{eq:proof-edge-lem:1}), complete the proof.
\end{proof}

A standard coloring argument bounds the spectral radius, and is given here in our second proposition.
\begin{proposition}\label{C-S-spec-rad-bnd}
Let $\mathcal{E}$ be the symmetric matrix of Cauchy-Schwarz coefficients, cf. (\ref{eq:ass-CS-loc-ineq}), for the subspaces $V_k$, $V_l$, and $V_{kl}$, $k,l=1,\ldots,N$, of the decomposition (\ref{eq:space-decomp}). Then,
$$
   \rho(\mathcal{E}) \leq C,
$$
where $C$ is a positive constant independent of the coefficients and mesh parameters.
\end{proposition}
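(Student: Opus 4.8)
The plan is to exploit the fact that $\mathcal{E}$ is a symmetric matrix with nonnegative entries, for which the spectral radius is controlled by the maximal row sum, $\rho(\mathcal{E}) \le \max_k \sum_l \epsilon_{kl}$ (by Gershgorin, or since $\rho(\mathcal{E})=\|\mathcal{E}\|_2\le\|\mathcal{E}\|_\infty$ for symmetric matrices). Each individual coefficient is trivially bounded, $\epsilon_{kl}\le 1$, because (\ref{eq:ass-CS-loc-ineq}) is nothing but the ordinary Cauchy--Schwarz inequality for the inner product $a(\cdot,\cdot)$, and this bound carries no dependence on the coefficient matrix $A$. It therefore suffices to show that in each row only a uniformly bounded number of the $\epsilon_{kl}$ are nonzero; the bound $\rho(\mathcal{E})\le C$ follows with $C$ equal to that number.

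First I would record the geometric support of each local subspace. An interior subspace $V_k$ consists of functions supported in $\overline{\Omega}_k$, while an edge subspace $V_{kl}$ consists of functions supported in $\overline{\Omega}_k\cup\overline{\Omega}_l$. Since $a(u,v)=\int \nabla u^T A\,\nabla v\,dx$ is an integral of a product of gradients, it vanishes whenever its two arguments have supports whose interiors are disjoint (meeting at most in a set of measure zero). Consequently $\epsilon_{kl}=0$ unless the supports of the two associated subspaces share a whole subdomain, i.e. unless the two subspaces are geometrically adjacent.

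Next I would count the adjacencies. For a fixed subspace, those geometrically adjacent to it are the (at most two) interior subspaces of the subdomains in its support together with every edge subspace belonging to an edge of those subdomains. Because the coarse partition $\{\Omega_k\}$ is shape regular in the sense of \cite{Brenner:1999:CNS}, each subdomain meets only a uniformly bounded number of edges and of neighboring subdomains. Hence the number of subspaces adjacent to any given one is bounded by a constant $C$ independent of $H$, $h$, and the coefficient jumps, which gives $\sum_l \epsilon_{kl}\le C$ in every row and therefore $\rho(\mathcal{E})\le C$. Equivalently, this can be phrased as the advertised coloring argument: the adjacency graph of the subspaces has bounded degree, so it admits a coloring with a bounded number $N_c$ of colors for which same-colored subspaces are mutually $a$-orthogonal, and then $\rho(\mathcal{E})\le N_c$.

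The only point requiring care is this combinatorial-geometric input: one must make precise that shape regularity of the coarse mesh forces a uniform bound on the number of edges meeting a subdomain and on the number of subdomains touching a given subdomain. This is standard for shape regular coarse triangulations, and it is the sole place where the mesh assumptions enter. The independence from the coefficient jumps is then automatic, since the factor $\epsilon_{kl}\le 1$ is purely metric-free.
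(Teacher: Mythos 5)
Your proof is correct and is exactly the ``standard coloring argument'' that the paper invokes without writing out: the trivial bound $\epsilon_{kl}\le 1$ from Cauchy--Schwarz, the vanishing of $a(u_k,u_l)$ for subspaces with essentially disjoint supports, and the uniform bound on the number of geometric adjacencies coming from shape regularity of the coarse partition. The only cosmetic remark is that the interior subspaces $V_k$ are in fact $a$-orthogonal to all the edge subspaces $V_{kl}\subset W$ by construction, so even fewer entries of $\mathcal{E}$ are nonzero than your adjacency count suggests; this does not affect the validity of your argument.
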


The third and final proposition gives an estimate of the $C_0^2$ such that (\ref{eq:ASM-splitting})-(\ref{eq:ASM-split-est}) hold for any $u\in V_h$.
\begin{proposition}\label{prop:low-bnd-sym}
 For any $u\in V_h$ there exists $u_k\in V_k$ $k=0,1,\ldots,N$ and $u_{k l}\in V_{k l}$ such that
\begin{eqnarray*}
 u = u_0 + \sum_k u_k + \sum_{\Gamma_{k l}\subset \Gamma}u_{k l}
\end{eqnarray*}
and
\begin{eqnarray*}
 a(u_0,u_0)+ \sum_k a(u_k,u_k) + \sum_{\Gamma_{k l}\subset \Gamma}a(u_{k l},u_{k l})&\lesssim& \left( 1+  \log\left(\frac{H}{h}\right)\right)^2
a(u,u),
\end{eqnarray*}
where $H=\max_k H_k$ with $H_k=\mathrm{diam}(\Omega_k)$.
\end{proposition}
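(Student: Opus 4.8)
The plan is to exhibit an explicit splitting adapted to the decomposition (\ref{eq:space-decomp}) and then estimate each of its three groups of components separately, arranging things so that the logarithmic factor enters only through the edge terms. First I would construct the splitting. Set $u_0 = I_H u \in V_0$, the coarse interpolant of Definition~\ref{def:coarse-interp}, which agrees with $u$ at every crosspoint of $\mathcal{V}$. Next, for each edge $\Gamma_{kl} \subset \Gamma$, let $u_{kl} \in V_{kl}$ be the function that equals $(u - I_H u)|_{\Gamma_{kl}}$ on $\Gamma_{kl}$, vanishes on $\Gamma \setminus \Gamma_{kl}$, and is discrete harmonic in $\Omega_k$ and in $\Omega_l$; this is exactly the edge function of Lemma~\ref{lem:proof-edge-lem}, now extended into both adjacent subdomains. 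Since $u - I_H u$ vanishes at the crosspoints, the discrete harmonic function $\mathcal{H}u - u_0$ and $\sum_{\Gamma_{kl}} u_{kl}$ carry the same data on all of $\Gamma$ and hence coincide by uniqueness of discrete harmonic extension. Finally set $u_k = (u - \mathcal{H}u)|_{\Omega_k} = \mathcal{P}_k u \in V_k$, the interior part vanishing on $\partial\Omega_k$. Adding the three groups gives $u_0 + \sum_{\Gamma_{kl}} u_{kl} + \sum_k u_k = \mathcal{H}u + (u - \mathcal{H}u) = u$, which establishes (\ref{eq:ASM-splitting}).

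Then I would estimate the energy of each group, using throughout that $\Omega_k \subset D_j$ forces the coefficient to be uniformly comparable to the constant $\lambda_j$ on $\Omega_k$, so that (\ref{eq:coeff-smooth-subd}) yields $a_k(v,v) \approx \lambda_j |v|_{H^1(\Omega_k)}^2$ with constants independent of the jumps. The interior terms are immediate: since $\mathcal{P}_k$ is an $a_k$-orthogonal projection, $\sum_k a(u_k, u_k) = \sum_k a_k(\mathcal{P}_k u, \mathcal{P}_k u) \le \sum_k a_k(u,u) = a(u,u)$, with no logarithmic loss. For the coarse term, Lemma~\ref{lem:coarse-loc-est} gives $|I_H u|_{H^1(\Omega_k)}^2 \lesssim (1 + \log(H_k/h)) |u|_{H^1(\Omega_k)}^2$; multiplying by $\lambda_j$ and summing over $k$ bounds $a(u_0, u_0)$ by $(1 + \log(H/h)) a(u,u)$. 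The edge terms carry the full factor: writing $a(u_{kl}, u_{kl}) = a_k(u_{kl}, u_{kl}) + a_l(u_{kl}, u_{kl})$, applying the discrete extension theorem (Lemma~\ref{lem:extension-thm}) on each side together with the fact that the zero extension of $u_{kl}$ off $\Gamma_{kl}$ is controlled by its $H^{1/2}_{00}(\Gamma_{kl})$ norm, and then invoking Lemma~\ref{lem:proof-edge-lem} separately on $\Omega_k$ and on $\Omega_l$, I obtain $a_k(u_{kl}, u_{kl}) \lesssim (1 + \log(H/h))^2 a_k(u,u)$ and the symmetric bound on the $l$-side. Summing over edges and using that each subdomain meets only a shape-regularity-bounded number of them collapses the sum to $(1 + \log(H/h))^2 a(u,u)$, and combining the three groups finishes the proof.

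The main obstacle is the edge estimate, but the genuinely delicate analytic work there---the appearance of the squared logarithm from combining the discrete Sobolev inequality with the $H^{1/2}_{00}$ bound---has already been isolated in Lemma~\ref{lem:proof-edge-lem}, so here it reduces to bookkeeping. What still requires care is robustness in the coefficient jumps: I must keep the contribution of $u_{kl}$ from $\Omega_k$ matched against $a_k(u,u)$ and its contribution from $\Omega_l$ against $a_l(u,u)$, never mixing the two subdomain constants $\lambda_j$ and $\lambda_{j'}$. This separation is exactly what the two-sided application of Lemma~\ref{lem:proof-edge-lem} (once per adjacent subdomain, each time against the local $H^1$ seminorm of $u$) secures, and it is the reason the final bound is independent of the jumps.
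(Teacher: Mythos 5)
Your proposal is correct and follows essentially the same route as the paper: the identical splitting $u_0=I_Hu$, $u_k=\mathcal{P}_ku$, $u_{kl}=(u-I_Hu)|_{\Gamma_{kl}}$ extended discrete harmonically, with the interior terms handled by orthogonality of $\mathcal{P}_k$, the coarse term by Lemma~\ref{lem:coarse-loc-est}, and the edge terms by Lemma~\ref{lem:extension-thm} combined with Lemma~\ref{lem:proof-edge-lem}. The only cosmetic difference is in the edge bookkeeping: you apply Lemma~\ref{lem:proof-edge-lem} once per adjacent subdomain and match each side against its own $a_s(u,u)$, whereas the paper bounds the common $H^{1/2}_{00}(\Gamma_{kl})$ norm once from the side with the larger $\Lambda$ and uses $\Lambda_s\leq C_1\lambda_s$; both yield the same jump-independent estimate.
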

\begin{proof}
We first set $u_0=I_H u\in V_0$, cf. Definition~\ref{def:coarse-interp}.
Next, let $u_k\in V_k$ for $k=1,\ldots,N$, be defined as $\mathcal{P}_k u_{|\overline{\Omega}_k}$ on $\Omega_k$, be extended by zero to the rest of $\Omega$.

Now define $w = u-u_0-\sum_k u_k$.
Note that $w$ is discrete harmonic inside each subdomain $\Omega_k$, since $u_0$ is discrete harmonic in the same way, and the sum
$$
   (w+u_0)_{|\overline{\Omega}_k}= u_{|\overline{\Omega}_k} - \mathcal{P}_k u_{|\overline{\Omega}_k} = \mathcal{H}_k u_{|\overline{\Omega}_k}
$$
is in fact a function of $W_k$. Moreover,
$$
  w(x) = u(x)-I_Hu(x)=0 \quad x\in \mathcal{V}.
$$
Consequently, $w$ can be decomposed as follows,
$$
  w=\sum_{\Gamma_{k l}\subset \Gamma} u_{k l},
$$
where $u_{k l}\in V_{k l}$, with $u_{|\Gamma_{k l}} = w_{|\Gamma_{k l}}$.

We now prove the inequality by considering each term at a time.
For the first term, by Lemma~\ref{lem:coarse-loc-est}, we see that
\begin{eqnarray}
 a(u_0,u_0)&\lesssim &\sum_k \Lambda_k |u_0|_{H^1(\Omega_k)}^2
 = \sum_k \Lambda_k |I_H u|_{H^1(\Omega_k)}^2   \label{eq:proof-lower-bnd-coarse}\\
\nonumber
 &\lesssim &\left(1+\log\left(\frac{H}{h}\right)\right) \sum_k a_k(u,u)=
   \left(1+\log\left(\frac{H}{h}\right)\right)  a(u,u).
\end{eqnarray}
For the second term, since $\mathcal{P}_k$ is the orthogonal projection in $a_k(u,v)$, we get
\begin{eqnarray} \label{eq:proof-lower-bnd-int}
 \sum_ka(u_k,u_k)=\sum_ka_k(\mathcal{P}_ku_{|\overline{\Omega}_k},\mathcal{P}_ku_{|\overline{\Omega}_k})\leq \sum_ka_k(u_{|\overline{\Omega}_k},u_{|\overline{\Omega}_k})=a(u,u).
\end{eqnarray}
And, for the last term, let $\Gamma_{k l}\subset \Gamma$ be the edge which is common to both $\Omega_k$ and $\Omega_l$. Note that $u_{k l} \in V_{k l}$ has support both in $\overline{\Omega}_k\cup\overline{\Omega}_l$.
By Lemma~\ref{lem:extension-thm}, we note that
\begin{eqnarray*}
 a(u_{k l},u_{k l}) = \sum_{s=k,l} a_s(u_{k l},u_{k l})\lesssim
  \sum_{s=k,l} \Lambda_s |u_{k l}|_{H^1(\Omega_s)}^2 \lesssim
  ( \sum_{s=k,l} \Lambda_s)|u_{k l}|_{H^{1/2}_{00}(\Gamma_{k l})}^2.
\end{eqnarray*}
Utilizing Lemma~\ref{lem:proof-edge-lem} for $s=k$ if $\Lambda_k\geq \Lambda_l$ (otherwise we take $s=l$), and (\ref{eq:coeff-smooth-subd}), we get
\begin{eqnarray*}
  ( \sum_{s=k,l} \Lambda_s) |u_{k l}|_{H^{1/2}_{00}(\Gamma_{k l})}^2
  &\lesssim& \left( 1+  \log\left(\frac{H}{h}\right)\right)^2
  \Lambda_k|u|_{H^1(\Omega_k)}^2 \\
  &\lesssim&
  \left( 1+  \log\left(\frac{H}{h}\right)\right)^2
  a_k(u,u).
\end{eqnarray*}
Combining the last two estimates, we get
\begin{eqnarray}  \label{eq:proof-lower-bnd-edge}
 \sum_{\Gamma_{k l}\subset \Gamma}a(u_{k l},u_{k l}) &\lesssim&
 \sum_{\Gamma_{k l}\subset \Gamma} \sum_{s=k,l}  \left( 1+  \log\left(\frac{H}{h}\right)\right)^2
 a_s(u,u)
\\
& \lesssim&  \left( 1+  \log\left(\frac{H}{h}\right)\right)^2 a(u,u). \nonumber
\end{eqnarray}
The proof then follows by summing (\ref{eq:proof-lower-bnd-coarse}), (\ref{eq:proof-lower-bnd-int}), and (\ref{eq:proof-lower-bnd-edge}) together.
\end{proof}

%
%
%
\section{Numerical experiments} \label{sec:numer-tests}
In this section we present some numerical test cases showing the performance of the proposed method. All of the following results have been obtained using Matlab by employing a modified GMRES method where the standard $l_2$ inner product have been replaced with the $a(\cdot,\cdot)$ inner product. The GMRES method is then accelerated with our preconditioners and then run until the $l_2$ norm of the initial residual is reduced by a factor of $10^{6}$, i.e., until $\|r_i\|_2/\|r_0\|_2\leq 10^{-6}$.

We consider the domain $\Omega=[0,1]\times[0,1]$ and subdivide it into equal square subdomains with coarse mesh parameter $H$ and fine mesh parameter $h$. The right hand side $f$ is chosen as $1$.
\begin{table}[htb]
\centering
\begin{tabular}{|c|cccccc|}\hline
h/H&$\frac{1}{4}$&$\frac{1}{8}$&$\frac{1}{16}$&$\frac{1}{32}$&$\frac{1}{64}$&$\frac{1}{128}$\\ \hline
$\frac{1}{8}$&7 (5.80e-1)&&&&&\\
$\frac{1}{16}$&9 (3.72e-1)&10 (5.60e-1)&&&&\\
$\frac{1}{32}$&11 (2.48e-1)&13 (3.57e-1)&10 (5.56e-1)&&&\\
$\frac{1}{64}$&13 (1.76e-1)&16 (2.41e-1)&14 (3.53e-1)&10 (5.56e-1)&&\\
$\frac{1}{128}$&15 (1.30e-1)&19 (1.72e-1)&17 (2.38e-1)&13 (3.53e-1)&10 (5.55e-1)&\\
$\frac{1}{256}$&16 (1.01e-1)&21 (1.28e-1)&20 (1.70e-1)&16 (2.38e-1)&13 (3.52e-1)&10 (5.54e-1)\\
\hline
\end{tabular}
\caption{Iteration numbers and estimates of the smallest eigenvalue (in parentheses) for the symmetric preconditioner for increasing values of $h$ and $H$. Here $A=2+\sin(\pi x)\sin(\pi y)$.}
\label{tbl:numres1}
\end{table}
The number of iterations and estimates of the smallest eigenvalue of the symmetric part of the preconditioned operator $T$, i.e., the smallest eigenvalue of $\frac{1}{2}\left(T^t+T\right)$, are presented in the tables below for each of the problems under consideration.
Our numerical results have shown that the second parameter, i.e. the norm of the operator, which is used in describing the convergence rate of the GMRES\- iteration is a constant independent of the mesh parameters and the co\-efficient $A$, which is in agreement with our analysis. 
\begin{table}[htb]
\centering
\begin{tabular}{|c|cccccc|}\hline
h/H&$\frac{1}{4}$&$\frac{1}{8}$&$\frac{1}{16}$&$\frac{1}{32}$&$\frac{1}{64}$&$\frac{1}{128}$\\ \hline
$\frac{1}{8}$&10 (5.31e-1)&&&&&\\
$\frac{1}{16}$&12 (3.07e-1)&13 (4.31e-1)&&&&\\
$\frac{1}{32}$&14 (1.77e-1)&18 (2.42e-1)&14 (4.36e-1)&&&\\
$\frac{1}{64}$&15 (1.21e-1)&23 (1.61e-1)&18 (2.82e-1)&12 (5.20e-1)&&\\
$\frac{1}{128}$&17 (8.93e-2)&27 (1.17e-1)&22 (1.94e-1)&16 (3.37e-1)&11 (5.53e-1)&\\
$\frac{1}{256}$&20 (6.94e-2)&31 (8.90e-2)&26 (1.41e-1)&20 (2.28e-1)&14 (3.57e-1)&11 (5.57e-1)\\
\hline
\end{tabular}
\caption{Iteration numbers and estimates of the smallest eigenvalue (in parentheses) for the symmetric preconditioner for increasing values of $h$ and $H$. Here $A=2+\sin(10\pi x)\sin(10\pi y)$.}
\label{tbl:numres2}
\end{table}

For the first numerical experiment we test the dependency of the iteration number and the smallest eigenvalue on the mesh parameters $h$ and $H$ when the coefficient $A$ is equal to $2+\sin(\pi x)\sin(\pi y)$, and report the results in Table~\ref{tbl:numres1}. We observe that the number of iteration required to converge increases and the smallest eigenvalue decreases as $\frac{H}{h}$ increases, however, the changes happen very slowly suggesting a poly-logarithmic dependence as predicted in our theory.

In the following two numerical experiments, we perform the same type of experiments as the previous one, for both the symmetric and the nonsymmetric variant of the preconditioner, and $A$ equals to $2+\sin(10\pi x)\sin(10\pi y)$. The results are reported in Table~\ref{tbl:numres2} and \ref{tbl:numres3}, respectively. We observe a convergence behavior which is similar to the one in the first experiment, once again confirming our analysis. We note also that the performances of the two variants of the preconditioner are almost identical.

\begin{table}[htb]

\centering
\begin{tabular}{|c|cccccc|}\hline
h/H&$\frac{1}{4}$&$\frac{1}{8}$&$\frac{1}{16}$&$\frac{1}{32}$&$\frac{1}{64}$&$\frac{1}{128}$\\ \hline
$\frac{1}{8}$&10 (5.20e-1)&&&&&\\
$\frac{1}{16}$&12 (3.11e-1)&13 (4.25e-1)&&&&\\
$\frac{1}{32}$&14 (1.79e-1)&18 (2.43e-1)&14 (4.44e-1)&&&\\
$\frac{1}{64}$&15 (1.21e-1)&23 (1.62e-1)&18 (2.84e-1)&12 (5.25e-1)&&\\
$\frac{1}{128}$&17 (8.94e-2)&27 (1.17e-1)&22 (1.95e-1)&16 (3.38e-1)&11 (5.54e-1)&\\
$\frac{1}{256}$&20 (6.94e-2)&31 (8.90e-2)&26 (1.41e-1)&20 (2.28e-1)&14 (3.57e-1)&11 (5.57e-1)\\
\hline
\end{tabular}
\caption{Iteration numbers and estimates of the smallest eigenvalue (in parentheses) for the nonsymmetric preconditioner for increasing values of $h$ and $H$. Here $A=2+\sin(10\pi x)\sin(10\pi y)$.}
\label{tbl:numres3}
\end{table}

\begin{figure}[htb]
\centering
         \includegraphics[scale=0.20]{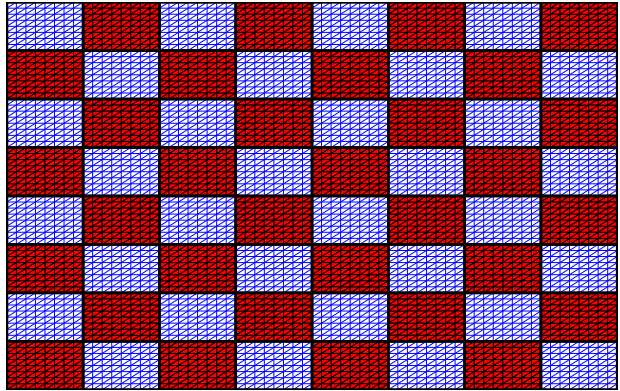}
  \caption{Checkerboard distribution of $A$, with $A = \alpha_1(2+\sin(10\pi x)\sin(10\pi y))$, where $\alpha_1 = \hat\alpha_1$ in the red (shaded) subdomains and 1 otherwise.}
  \label{fig:checkerboard}
\end{figure}

\begin{table}[htb]
\centering
\begin{tabular}{|c|c c|}
\hline
$\hat\alpha_1$ &Symmetric variant & Nonsymmetric variant\\
\hline
  $10^0$&23 (1.61e-1)&23 (1.62e-1)\\
  $10^1$&26 (1.61e-1)&26 (1.61e-1)\\
  $10^2$&27 (1.60e-1)&27 (1.60e-1)\\
  $10^3$&27 (1.60e-1)&27 (1.60e-1)\\
  $10^4$&27 (1.60e-1)&27 (1.60e-1)\\
  $10^5$&27 (1.60e-1)&27 (1.60e-1)\\
  $10^6$&27 (1.60e-1)&27 (1.60e-1)\\
\hline
\end{tabular}
\vspace{5mm}
\caption{Iteration numbers and estimates of the smallest eigenvalue for different values of $\alpha_1$ in the coefficient $A=\alpha_1(2+\sin(10\pi x)\sin(10\pi y))$ and a fixed mesh $h=1/64$ and $H=1/8$.}
\label{tbl:checkerboard}
\end{table}
In the last example we consider an example where $A$ is discontinuous across subdomains, given as $A=\alpha_1(2+\sin(10\pi x)\sin(10\pi y))$ with $\alpha_1$ being a constant in each subdomain. We divide $\Omega$ into equal square subdomains with diameter $H=1/8$ and let the fine triangulation have mesh size $h=1/64$. We then assign the parameter $\alpha_1$ in the coefficient $A$, the value $1$ (white subdomain) or the value $\hat\alpha_1$ (red or shaded subdomain) in a checkerboard fashion as depicted in Figure~\ref{fig:checkerboard}. Number of iterations required to converge and estimates of the smallest eigenvalues for different values of $\alpha_1$ (varying jumps) are reported in Table~\ref{tbl:checkerboard}, showing that the convergence is independent of the jumps in the coefficient supporting our analysis. Again, we see an identical performance of the two variants of the algorithm.





\end{document}